\documentclass[10pt]{article}
\usepackage{geometry}                
\geometry{left=3cm,right=3cm,top=2cm,bottom=3cm}              
\usepackage{amssymb}
\usepackage{epstopdf}
\usepackage{array,amsfonts,amsmath,amsthm,amssymb,graphicx,relsize,url,array,indentfirst,booktabs,geometry,}
\usepackage{listings}
\usepackage{algorithmic}
\usepackage[ruled]{algorithm2e}	
\usepackage{verbatim}
\usepackage{subcaption}

\theoremstyle{plain}
\newtheorem{thm}{Theorem}[section]

\newtheorem{prop}[thm]{Proposition}
\newtheorem{defn}[thm]{Definition}

\lstset{language=Matlab}
\lstset{breaklines}
\lstset{extendedchars=false}

\newcommand{\norm}[1]{\left\lVert#1\right\rVert}

\newcommand{\tabincell}[2]{\begin{tabular}{@{}#1@{}}#2\end{tabular}}

\def\<{{\langle}}
\def\>{{\rangle}}

\def\bR{{\mathbb{R}}}

\def\H{{\mathcal{H}}}

\def\F{{\mathcal{F}}}

\def\cL{{\mathcal{L}}}
\def\cR{{\mathcal{R}}}

\newcommand{\bs}[1]{\boldsymbol{#1}}
\newcommand{\bit}{\begin{itemize}}
\newcommand{\eit}{\end{itemize}}

\newcommand{\istate}{u}

\newcommand{\ipar}{m}

\newcommand{\ParSpace}{\mathcal{H}}

\newcommand{\wall}{W_{\text{all}}}

\newcommand{\eigen}{\lambda}

\newcommand{\mupr}{\mu_{\text{pr}}}
\newcommand{\mupost}{\mu_{\text{post}}}

\newcommand{\Cpr}{\mathcal{C}_{\text{pr}}}
\newcommand{\Cpost}{\mathcal{C}_{\text{post}}}
\newcommand{\prmean}{\ipar_{\text{pr}}}
\newcommand{\map}{\ipar_{\text{map}}}
\newcommand{\bmap}{\bipar_{\text{map}}}

\newcommand{\Normal}{\mathcal{N}}

\newcommand{\Hmisfit}{\mathcal{H}_m}
\newcommand{\pHmisfit}{\widetilde{\mathcal{H}}_m}


\newcommand{\pilike}{\pi_{\text{like}}}

\newcommand{\Gnoise}{\mathbf{\Gamma}_{\text{n}}}

\newcommand{\Bop}{\mathcal{B}}
\newcommand{\obs}{\mathbf{y}}
\newcommand{\obsd}{n_y}
\newcommand{\ObsSpace}{\mathcal{Y}}
\newcommand{\obsn}{\varepsilon}

\newcommand{\trace}[1]{\operatorname{tr} ({#1}) }
\newcommand{\logdet}[1]{\operatorname{logdet}\left( {#1} \right)}
\newcommand{\ave}{\mathbb{E}}

\newcommand{\bipar}{\mathbf{\ipar}}
\newcommand{\bF}{\mathbf{F}}
\newcommand{\bJ}{\mathbf{J}}
\newcommand{\bmupost}{\mathbf{\ipar}_{\text{post}}}
\newcommand{\bmumap}{\mathbf{\ipar}_{\text{map}}}
\newcommand{\bpostcov}{\mathbf{\Gamma}_{\text{post}}}
\newcommand{\bmupr}{\mathbf{m}_{\text{pr}}}
\newcommand{\bCpr}{\mathbf{\Gamma}_{\text{pr}}}
\newcommand{\bHmisfit}{\mathbf{H}_m}
\newcommand{\bpHmisfit}{\widetilde{\mathbf{H}}_m}

\def\mpr{{m_{\text{pr}}}}

\usepackage{xcolor}

\linespread{1.1}

\usepackage{textcomp,fancyhdr,lastpage}
\usepackage{extarrows}
\usepackage[utf8]{inputenc}
\DeclareGraphicsRule{.tif}{png}{.png}{`convert #1 `dirname #1`/`basename #1 .tif`.png}
\DeclareMathOperator*{\argmax}{arg\,max}

\title{A fast and scalable computational framework for large-scale and high-dimensional Bayesian optimal experimental design}
\author{Keyi Wu, Peng Chen, Omar Ghattas}
\date{}                                           

\begin{document}

\maketitle

\begin{abstract}
We develop a fast and scalable computational framework to solve large-scale and high-dimensional Bayesian optimal experimental design problems. In particular, we consider the problem of optimal observation sensor placement for Bayesian inference of high-dimensional parameters governed by partial differential equations (PDEs), which is formulated as an optimization problem that seeks to maximize an expected information gain (EIG). Such optimization problems are particularly challenging due to the curse of dimensionality for high-dimensional parameters and the expensive solution of large-scale PDEs. To address these challenges, we exploit two essential properties of such problems: (1) the low-rank structure of the Jacobian of the parameter-to-observable map to extract the intrinsically low-dimensional data-informed subspace, and (2) the high correlation of the approximate EIGs by a series of approximations to reduce the number of PDE solves. Based on these properties, we propose an efficient offline-online decomposition for the optimization problem:  an offline stage of computing all the quantities that require a limited number of PDE solves independent of parameter and data dimensions, which is the dominant cost, and an online stage of optimizing sensor placement that does not require any PDE solve. For the online optimization, we propose a swapping greedy algorithm that first construct an initial set of sensors using leverage scores and then swap the chosen sensors with other candidates until certain convergence criteria are met. We demonstrate the efficiency and scalability of the proposed computational framework by a linear inverse problem of inferring the initial condition for an advection-diffusion equation, and a nonlinear inverse problem of inferring the diffusion coefficient of a log-normal diffusion equation, with both the parameter and data dimensions ranging from a few tens to a few thousands. 

\end{abstract}

\textbf{Keywords}: optimal experimental design, Bayesian inverse problems, expected information gain, swapping greedy algorithm, low-rank approximation, offline-online decomposition
\section{Introduction}

In mathematical modeling and computational simulation in many scientific and engineering fields, uncertainties are ubiquitous, which may arise from model coefficients, initial or boundary conditions, external loads, computational geometries, etc. It is crucial to quantify and reduce such uncertainties for more accurate and reliable computational prediction and model-based system optimization. Bayesian inference provides an optimal framework for quantifying the uncertainties with suitable prior probability distribution based on domain knowledge or expert belief, and reducing the uncertainties characterized by their posterior probability distribution through fusing noisy experimental or observational data with the model using Bayes' rule. However, it is challenging to acquire enough data if the experiment is expensive or time consuming. In this situation, only a limited number of data can be acquired given budget or time constraint. How to design the experiment such that the limited data can reduce the uncertainties as much as possible becomes very important, which is the central task of optimal experimental design (OED) \cite{AtkinsonDonev92,AlexanderianPetraStadlerEtAl14, AlexanderianGloorGhattas16,LongMotamedTempone15,LongScavinoTemponeEtAl13,AlexanderianPetraStadlerEtAl16, SaibabaAlexanderianIpsen17,CrestelAlexanderianStadlerEtAl17,AttiaAlexanderianSaibaba18,HuanMarzouk13,HuanMarzouk14,HuanMarzouk16, Aretz-NellesenGreplVeroy19, Aretz-NellesenChenGreplEtAl20}.

OED problems can be generally formulated as minimizing or maximizing certain criterion that represents the uncertainty, e.g., the trace (A-optimality) or determinant (D-optimality) of the posterior covariance. In the Bayesian framework, a common choice is an expected information gain (EIG), where the information gain is measured by a Kullback--Leibler divergence between the posterior distribution and the prior distribution, and the expected value is taken as an average of this measure at all realizations of the data. Consequently, two integrals are involved in evaluating the EIG, one with respect to the posterior distribution and the other with respect to the data distribution. Several computational challenges are faced to evaluate the EIG, which includes: (1) evaluation of the double integrals with one involving integration with respect to the posterior distribution may require a large number of samples from the posterior distribution, especially if the uncertain parameters are high-dimensional; (2) the parameter-to-observable map at each sample is expensive to evaluate so only a limited number of map evaluation can be afforded, which is often the case when the map evaluation involves solution of large-scale models, e.g., represented by partial differential equations. For example, in underground fluid flow one needs to infer the infinite-dimensional permeability field from observation of the fluid velocity or pressure at some locations, or in contaminant diffusion and transportation one needs to infer its source or infinite-dimensional initial concentration field from observation of the concentration at certain locations and time instances. Both of the examples feature large-scale models with high-dimensional parameters after (high-fidelity) discretization. Moreover, when the space for the possible experimental design is also high-dimensional, e.g., the number of candidate sensor locations for the observation is high, one faces the challenge of solving high-dimensional optimization problems, i.e., maximizing the EIG with respect to the high-dimensional design variable.

\textbf{Related Work.} OED problems with the above computational challenges have attracted increasing attention in recent years. An infinite-dimensional version of Kullback-Leibler divergence involved in the EIG is studied in \cite{AlexanderianGloorGhattas16}. For linear problems, i.e., the parameter-to-observation map is linear in the parameter, the EIG is equivalent to the D-optimal design, which measures the log-determinant of the prior-covariance preconditioned misfit Hessian, whose computation depends on its dominant eigenvalues. The fast decay of the eigenvalues has been proven for some model problems and numerically demonstrated for many others \cite{
	Bui-ThanhGhattas12a,
	Bui-ThanhGhattasMartinEtAl13,
	AlexanderianPetraStadlerEtAl16, AlexanderianPetraStadlerEtAl17,
	AlexanderianPetraStadlerEtAl14, CrestelAlexanderianStadlerEtAl17,
	PetraMartinStadlerEtAl14, IsaacPetraStadlerEtAl15, SaibabaAlexanderianIpsen17, ChenVillaGhattas17, AlexanderianSaibaba18, ChenVillaGhattas19, ChenGhattas19a, ChenWuChenEtAl19a, ChenGhattas20}. 
Based on this property, efficient methods have been developed to evaluate D-optimal criterion for infinite-dimensional Bayesian linear problems \cite{SaibabaAlexanderianIpsen17, SaibabaKitanidis15}. 
In \cite{AlexanderianSaibaba18}, the authors exploited this property and proposed a gradient-based optimization method for D-optimal experimental design, which was extended in \cite{AttiaAlexanderianSaibaba18} for goal-oriented optimal design of experiments.
For nonlinear problems, a direct statistical estimator of EIG is double-loop Monte Carlo(DLMC) \cite{BeckDiaEspathEtAl18}, which approximates the EIG via inner and outer Monte Carlo sample average approximations (double loops). Both loops need to generate a large number of samples requiring multiple PDE solves for each sample. The authors of
\cite{LongMotamedTempone15, LongScavinoTemponeEtAl13} proposed to approximate the EIG with Laplace approximations, which involve numerous inverse problem solves and eigenvalue decomposition of prior-preconditioned misfit Hessian. The optimal design is obtained by an exhaustive search over a prespecified set of experimental scenarios. Polynomial chaos expansion was employed in \cite{HuanMarzouk13,HuanMarzouk14,HuanMarzouk16} as a surrogate for the expensive PDE model,  which is however not suitable for solving OED problems with high-dimensional parameters. The authors of \cite{KhodjaPrangeDjikpesse10}  introduced a Bayesian methodology for optimal sensor placement with D-optimal criterion for quasi-linear problems and used a greedy algorithm to sequentially select observation locations. \cite{YuZavalaAnitescu18} presented a scalable framework for sensor placement for PDE problems with Laplace approximation and investigated two criteria (total flow variance and A-optimal design) with a sparsity-inducing approach and a sum-up rounding approach to find the optimal design. Both approaches first relax the binary constraints to continuous function constraints to solve the easier continuous optimization problem, and then induce the integer solution.
\cite{ManoharBruntonKutzEtAl18}  considered problems of sensor placement for signal reconstruction in a scalable framework with D-optimal criterion and compared the accuracy and efficiency between convex optimization and QR pivoting (greedy method) to find the optimal design.
The author of \cite{Papadimitriou04} proposed a formulation of optimal sensor placement problem with Laplace method to approximate the expected information gain, and introduced a sequential sensor searching algorithm to find the optimal sensors. However, in all these papers, many expensive PDEs have to be solved in each optimization iteration, either the greedy algorithms to sequentially add sensors or the optimization algorithms to simultaneously update sensors, which may lead to prohibitively large number of expensive PDEs to solve, especially when the parameter dimension or the data dimension is big. 



\textbf{Contributions}. To address the computational challenges for Bayesian OED problems of maximizing EIG with large-scale models and high-dimensional parameters,
we propose a fast and scalable computational framework, fast in that only a limited number of the large-scale models are solved, scalable in that the computational complexity is independent of both the parameter dimension and the data dimension. These advantages are made possible by (1) using a sequence of approximation of the posterior including Laplace approximation, low-rank approximation of the posterior covariance, replacement of the design dependent MAP point for the Laplace approximation by a fixed MAP point and a prior sample point, (2) exploiting an efficient offline-online decomposition of the computation, offline solving all the large-scale models and online solving the model-independent optimization problem, (3) proposing a swapping greedy algorithm used in the online stage to find the optimal design whose computational complexity depends only on the dimension of the subspace informed by the data, not on the nominal parameter and data dimensions. More specifically, for linear problems where maximizing the EIG is equivalent to minimizing the D-optimality, we derive a new approximate form of the D-optimal criterion for which each evaluation of different designs does not involve any PDE solves. We introduce a swapping greedy algorithm that exploit the dominant data subspace information quantified by the Jacobian of the parameter-to-observable map to search for an optimal design. For nonlinear problems, we use Laplace approximation and exploit the high correlation of the approximate EIGs with the Laplace approximation centered at different points. Moreover, we propose an efficient offline-online decomposition of the optimization problem where the key information is extracted in the offline stage by solving a limited number of PDEs, which is then used in the online stage to find the optimal design by the swapping greedy algorithm.
We demonstrate the effectiveness and scalability of our computational framework by two numerical experiments, a linear inverse problem of inferring the initial condition for an advection-diffusion equation, and a nonlinear inverse problem of inferring the diffusion coefficient of a log-normal diffusion equation, with both the parameter and data dimensions ranging from a few tens to a few thousands. 

Paper overview. In Section \ref{sec:background}, we review infinite-dimensional Bayesian inverse problem with 
finite-element discretization. We also review the expected information gain optimal criterion, including the specific formulation for sensor placement problems. In Section \ref{sec:linear}, we present the optimal experimental design problem for Bayesian linear inverse problems and its discretization to finite dimensions. We propose an efficient approach to evaluate the criterion and introduce a swapping greedy algorithm to search for an optimal design. In Section \ref{sec:nonlinear}, we introduce the optimal experimental design problem for nonlinear model with Laplace approximation and its discretization to finite dimension. We formulate a new framework with several further approximations, and extend our greedy algorithm into this framework.  Section \ref{sec:numerical} presents our numerical results for both linear and nonlinear Bayesian inverse problems, followed by the last Section \ref{sec:conclusion} for conclusions.

\section{Bayesian optimal experimental design}
\label{sec:background}

In this section, we present a general formulation Bayesian inverse problem for an abstract forward model and an infinite-dimensional parameter field, which is discretized by finite element approximation to be finite dimensional. We present the optimal experimental design problem as an optimal selection of the sensor locations among candidate sensor locations, based on a commonly used design criterion---expected information gain.
\subsection{Bayesian inverse problem}
\label{sec:bayes}
We consider a forward model presented in an abstract form as 
\begin{equation}\label{eq:forward}
\cR(\istate,\ipar) = 0 \quad \text{ in } \mathcal{V}',
\end{equation}
where $\istate$ is the state variable in a separable Banach space $\mathcal{V}$ with its dual $\mathcal{V}'$ defined in a physical domain $\mathcal{D} \subset \bR^{n_x}$ with Lipschitz boundary $\partial \mathcal{D}$, where ${n_x} = 1, 2, 3$; $\ipar$ is the parameter field to be  inferred, which is assumed to live in a Hilbert space $\mathcal{M}$ defined in $\mathcal{D}$; $\cR(\cdot, \cdot): \mathcal{V} \times \mathcal{M} \to \mathcal{V}'$ represents the strong form of the forward model, whose weak form can be written as: find $u \in \mathcal{V}$ such that 
\begin{equation}\label{eq:forward-weak}
r(u, m, v): = \; _{\mathcal{V}} \langle v, \cR(\istate,\ipar) \rangle_{\mathcal{V}'} = 0 \quad \forall v \in \mathcal{V},
\end{equation} 
where $ _{\mathcal{V}} \langle \cdot, \cdot \rangle_{\mathcal{V}'}$ denotes the duality pairing.

The data model with additive Gaussian noise is given as 
\begin{equation}
\obs = \Bop (\istate) + \mathbf{\epsilon}, 
\end{equation}
where $\Bop: \mathcal{V} \to \bR^{\obsd}$ is an observation operator that maps the state variable to the data $\obs \in \mathcal{Y}  = \bR^{\obsd}$ at $\obsd$ observation points, corrupted with an additive Gaussian noise $\mathbf{\epsilon} \sim \mathcal{N}(0,\Gnoise)$ with symmetric positive definite covariance  matrix $\Gnoise \in \bR^{\obsd \times \obsd}$. For notational convenience, we denote the parameter-to-observable map $\mathcal{F}$ as
\begin{eqnarray}\label{eq:p2o}
\mathcal{F}(\ipar) = \Bop (\istate(\ipar)).
\end{eqnarray}

We consider that $m$ has a Gaussian prior measure $ \mupr = \mathcal{N}(\mpr,\Cpr)$ with mean $\mpr \in \mathcal{M}$ and covariance operator $\Cpr: \mathcal{M} \to \mathcal{M}'$ from $\mathcal{M}$ to its dual $\mathcal{M}'$, which is a strictly positive self-adjoint operator of trace class. In particular, we use $\Cpr = \mathcal{A}^{-\alpha}$ for sufficiently large $\alpha > 0$, such that $2\alpha > {n_x}$, to guarantee that $\Cpr$ is of trace class, where $\mathcal{A}$ is an elliptic differential operator equipped with homogeneous Neumann boundary condition along the boundary $\partial D$ \cite{PetraMartinStadlerEtAl14}. This choice of prior guarantees a bounded pointwise variance and a well-posed infinite-dimensional Bayesian inverse problem.

Under the assumption of Gaussian noise for $\epsilon$, the likelihood function $\pilike(\obs|\ipar)$ satisfies
\begin{equation}\label{eq:likelihood}
\pilike(\obs|\ipar)  \propto \exp(-\Phi(\ipar,\obs)), 
\end{equation}
where the potential $ \Phi(\ipar,\obs)$ is defined as
\begin{equation}\label{eq:potential}
 \Phi(\ipar,\obs) := \frac{1}{2} \norm{\mathcal{F}(\ipar) - \obs}^2_{\Gnoise^{-1}},
\end{equation}
where $||\mathbf{v}||_{\Gnoise^{-1}}^2 = \mathbf{v}^T \Gnoise^{-1} \mathbf{v}$ for any $mathbf{v} \in \bR^{\obsd}$.
By Bayes' rule, the posterior measure $ \mupost (\ipar | \obs)$ of the parameter $m$ conditioned on the observation data $y$ is given by the Radon-Nikodym derivative as 
\begin{equation}
\frac{d \mupost (\ipar | \obs) }{d \mupr(\ipar)} = \frac{1}{Z} \pilike(\obs|\ipar),
\end{equation}
where $Z$ is a normalization constant given by 
\begin{equation}
Z = \int_\mathcal{M} \pilike(\obs|\ipar) d \mupr(\ipar).
\end{equation}


\subsection{Discretization of the Bayesian inverse problem}
\label{sec:discretization}
The parameter field $m \in \mathcal{M}$ in the Hilbert space $\mathcal{M}$ is infinite-dimensional. For computational purpose, we use a finite element discretization to approximate it in a subspace $\mathcal{M}_n \subset \mathcal{M}$ of dimension $n$, which is spanned by piecewise continuous Lagrange polynomial basis functions $\{ \phi_j\}^n_{j=1}$ defined over a mesh with elements of size $h$ and 
vertices $\{x_j\}_{j=1}^n$, such that $\phi_j(x_i) = \delta_{ij}$, $i,j = 1, \dots, n$, where $\delta_{ij}$ denote the Kronecker delta. The approximation of the parameter $m \in \mathcal{M}$ in $\mathcal{M}_n$, denoted as $m_h$, can be expressed as 
\begin{equation}
m_h = \sum^n_{j=1} m_j \phi_j.
\end{equation}
Here
we denote $\bipar = (m_1,\dots, m_n)^T \in \bR^n$  as the coefficient vector of $m_h$. 

Let $\mathbf{M}$ denote the finite element mass matrix whose entries are given by
\begin{equation}
\mathbf{M}_{ij} = \int_D \phi_i(x) \phi_j (x) dx, ~i,j = 1,\dots,n.
\end{equation}
Let $\mathbf{A}$ denote the finite element matrix corresponding to the elliptic differential operator $\mathcal{A}$, i.e., 
\begin{equation}
\mathbf{A}_{ij} = \int_D \, _{\mathcal{M}'} \langle \mathcal{A} \phi_j(x), \phi_i(x) \rangle_\mathcal{M} dx, \quad i,j = 1, \dots, n.
\end{equation}
With the specification of the parameter $\alpha = 2$, which satisfies $2\alpha > d$ for $d\leq 3$, we obtain a discrete covariance matrix $\bCpr$ corresponding to the covariance operator $\Cpr$, given by
\begin{eqnarray}\label{eq:bCpr}
\bCpr = \mathbf{A}^{-1} \mathbf{M} \mathbf{A}^{-1}.
\end{eqnarray}
Then, the prior density for the coefficient vector $\bipar$, which we also call discrete parameter, is given by 
\begin{equation}
\pi_{\text{pr}} (\bipar) \propto \exp\left(-\frac{1}{2} ||\bipar - \bipar_{\text{pr}}||^2_{\bCpr^{-1}} \right),
\end{equation}
where $\bipar_{\text{pr}} \in \bR^n$ is the coefficient vector of the approximation of the prior mean $\prmean$ in the finite element space $\mathcal{M}_n$. Correspondingly, the posterior density of the discrete parameter $\bipar$ follows the Bayes' rule 
\begin{equation}
\pi_{\text{post}} (\bipar | \obs) = \frac{1}{Z} \pilike(\obs|\bipar)  \pi_{\text{pr}}(\bipar),
\end{equation}
where $\pilike(\obs|\bipar)$
is
the likelihood function for the discrete parameter $\bipar$ given by 
\begin{eqnarray}
\pilike(\obs|\bipar) \propto \exp\left( - \Phi(\mathbf{m}, \obs)\right),
\end{eqnarray}
with the potential 
\begin{equation}
\Phi(\mathbf{m}, y) = \frac{1}{2} ||\mathbf{F}(\mathbf{m}) - \obs||^2_{\Gnoise^{-1}},
\end{equation}
where $\bF: \bR^{n} \to \bR^{\obsd}$ denotes the discrete parameter-to-observable map corresponding to \eqref{eq:p2o}, 
which involves the solution of the forward model \eqref{eq:forward-weak} by a finite element discretization in a subspace $\mathcal{V}_{n_u} \subset \mathcal{V}$ spanned by basis functions $\{\psi_j\}_{j=1}^{n_u}$, in which, the finite element state $u_h = \sum_{j=1}^{n_u} u_j \psi_j$ with coefficient vector $\bs{u} = (u_1, \dots, u_{n_u})^T$.



\subsection{Expected information gain} 
To measure the information gained from the observation data, different information criteria have been used, e.g., A-optimal or D-optimal criterion, which uses the trace or determinant of the covariance of the posterior \cite{AlexanderianGloorGhattas15}. Here we choose to use an expected information gain (EIG), which is the Kullback--Leibler (KL) divergence between the posterior and the prior averaged over all data realizations. The KL divergence, denoted as $D_{\text{KL}}$, also known as the information gain, is used to measure the information gained from data $\obs$, which is defined as
\begin{equation}\label{eq:KL}
D_{\text{KL}}(\mupost (\cdot | \obs) \| \mupr ) := \int_{\ParSpace} \ln\left(
\frac{d\mupost(\ipar | \obs)}{d\mupr(\ipar)}\right) d\mupost(\ipar | \obs).
\end{equation}

The EIG, denoted as $\Psi$, takes all possible realizations of the data $\obs \in \mathcal{Y}$ into account, which is defined as
\begin{equation}\label{eq:EIG}
\begin{split}
\Psi 
&:= \ave_\obs \left[D_{\text{KL}}(\mupost(\cdot |
                 \obs)\| \mupr)\right] \\  
& = \int_{\ObsSpace} D_{\text{KL}}(\mupost(\cdot | \obs) \| \mupr )
       \, \pi(\obs)\, d\obs \\ 
& =\int_{\ObsSpace}  \int_{\ParSpace} D_{\text{KL}}(\mupost(\cdot | \obs)
\| \mupr ) \, \pilike(\obs | \ipar) \, d\mupr(\ipar)\, d\obs \\
& = \int_{\ParSpace}\int_{\ObsSpace} D_{\text{KL}}(\mupost(\cdot | \obs)
        \| \mupr ) \, \pilike(\obs | \ipar) \, d\obs \, d\mupr(\ipar),
\end{split}
\end{equation}
where $\pi(\obs)$ in the second equality is the density of the data $\obs$, which follows a Gaussian distribution $\mathcal{N}(\mathcal{F}(m), \Gnoise)$ conditioned on the parameter $m$, i.e., $\pi(\obs) = \pilike(\obs | \ipar) \, d\mupr(\ipar)$ which is used in the third equality. The fourth equality is obtained by switching the order of integration under the assumption that $D_{\text{KL}}(\mupost(\cdot | \obs)
\| \mupr )$ is integrable by Fubini theorem. 
An efficient evaluation of the KL divergence in is presented in Sections \ref{sec:linear} and \ref{sec:nonlinear} for linear and nonlinear Bayesian inverse problems, respectively.

\subsection{Optimal experimental design for sensor placement} 
\label{sensor}
We consider a sensor placement problem in which we have a set of candidate sensor locations $\{x_i \}_{i=1}^d$ in the physical domain $\mathcal{D}$, among which we need to choose $r$ locations (due to limited budget) to place the sensors for data collection. To represent the selection, we use a design matrix $W$, which is a boolean matrix $W \in \bR^{r\times d}$ such that $W_{ij} = 1$ if the $i$-th sensor is placed at the $j$-th candidate sensor location, i.e., 
\begin{equation}\label{eq:W}
W \in \mathcal{W} := \left\{W \in \bR^{r\times d}: W_{ij} \in \{0,1 \},\;  \sum^d_{j=1} W_{ij}=1, \; \sum^r_{i=1} W_{ij} \in  \{0,1 \}  \right\}.
\end{equation}
We consider the case of uncorrelated observation noises with covariance matrix 
\begin{equation}\label{eq:Gamma_n_d}
\Gnoise^d = \text{diag}(\sigma_1^2,\dots, \sigma_d^2)
\end{equation}
at $d$ candidate sensor locations, where $\sigma^2_j$ indicates the noise level at the $j$-th candidate sensor location.
By $\mathcal{B}_d$ we denote the observation operator at all $d$ candidate sensor locations. For a specific design $W$, we have the observation operator $\mathcal{B} = W\mathcal{B} _d$, so that the EIG $\Psi(W)$ depends on the design matrix through the observation operator $\mathcal{B}$. To this end, 
the optimal experimental design problem can be formulated as: find an optimal design matrix $W \in \mathcal{W}$ such that 
\begin{equation}
W = \argmax_{W \in  \mathcal{W} }\Psi(W).
\end{equation}

\section{Linear Bayesian inverse problems}
\label{sec:linear}
We consider linear Bayesian inverse problems in this section, where the parameter-to-observable map $\mathcal{F}(m)$ is linear with respect to the parameter $m$. In the following, we present an explicit form of the EIG for the linear inverse problem, derive an optimization problem that can be divided into an ``offline" phase of computing the expensive part that involves solution of the forward model and an ``online" phase of optimization with respect to the design matrix, which is free from solving the forward model, and introduce two greedy algorithms for the online phase of optimization.

\subsection{Expected information gain}
Given a Gaussian prior measure $\mupr = \Normal\left( \prmean, \Cpr \right)$, for a linear parameter-to-observable map $\F$, the posterior measure $\mupost $ is also Gaussian, $\mupost = \Normal\left(m_{\text{post}}, \Cpost \right)$, with
\begin{equation}
\Cpost = (\mathcal{F}^* \Gnoise^{-1} \mathcal{F} + \Cpr^{-1} )^{-1}, \quad m_{\text{post}} = \Cpost (\mathcal{F}^* \Gnoise^{-1} \obs + \Cpr^{-1} m_{\text{pr}}),
\end{equation}
where $\mathcal{F}^*$ is the adjoint of $\mathcal{F}$.
 For the Gaussian prior and posterior,  we have a closed form for the KL divergence defined in \eqref{eq:KL} as \cite{AlexanderianGloorGhattas16} 
 \begin{equation}\label{eq:KL-linear}
D_{\text{KL}}( \mupost \| \mupr ) =
\frac{1}{2}\left[\logdet{\mathcal{I} + \pHmisfit} 
  - \trace{\Cpost^\frac{1}{2} \Hmisfit \Cpost^\frac12}
  + \| m_{\text{post}} -m_{\text{pr}} \|^2_{\Cpr^{-1}}\right].
\end{equation}
Here $ \pHmisfit = \Cpr^\frac{1}{2} \, \Hmisfit \, \Cpr^\frac12 $ is a
prior-preconditioned data misfit Hessian, and $\Hmisfit = \mathcal{F}^*  \Gnoise^{-1}  \mathcal{F} $ is the Hessian of the potential $\Phi(m,\obs)$. The term $\logdet{\mathcal{I} + \pHmisfit}$ is the logarithm of the determinant of $\mathcal{I} + \pHmisfit$, and $\trace{\Cpost^\frac{1}{2} \Hmisfit \Cpost^\frac12}$ is the trace of $\Cpost^\frac{1}{2} \Hmisfit \Cpost^\frac12$.
Moreover, it is shown in \cite{AlexanderianGloorGhattas15} that the EIG is simply
\begin{equation}\label{eq:EIGlinear}
\Psi = \frac{1}{2}\logdet{\mathcal{I} + \pHmisfit}.
\end{equation}

%
By the discretization in Section \ref{sec:discretization}, we obtain the discrete Gaussian posterior measure $\mathcal{N}(\bmupost, \bpostcov)$ with covariance and mean
\begin{equation}
\bpostcov = (\bF^T\Gnoise^{-1} \bF+ \bCpr^{-1})^{-1}, \quad \bmupost = \bpostcov(\bF^T \Gnoise^{-1} \obs + \bCpr^{-1} \bmupr).
\end{equation}
where $\bF^T \in \bR^{\obsd \times n}$ is the discrete linear parameter-to-observable map and $\bF^T$ is its transpose.
Let $\bHmisfit = \bF^T\Gnoise^{-1} \bF $, and $\bpHmisfit = \bCpr^{\frac{1}{2}}\bHmisfit \bCpr^{\frac{1}{2}}$ with the discrete prior covariance $\bCpr$, 
we obtain a discrete EIG corresponding to \eqref{eq:EIGlinear} as 
\begin{equation}\label{eq:EIGdiscrete}
\Psi = \frac{1}{2}\logdet{\mathbf{I} + \bpHmisfit}.
\end{equation}

When the dimension of the discrete parameter $\mathbf{m}$ becomes large,
and the discrete parameter-to-observable map $\mathbf{F}$ involves the inverse of some large-scale discrete differential operator, it is computationally prohibitive to compute $\bpHmisfit$ in \eqref{eq:EIGdiscrete}. 
However, by the blessing of the common ill-posedness of high-dimensional inverse problems, i.e., the data only effectively infer a low-dimensional subspace of the high-dimensional parameter space, which is depicted by a fast decay of the eigenvalues of the Hessian $\bpHmisfit$,
 we can compute a low-rank approximation of the Hessian (with certain transformation) as detailed in next section. 
 

\subsection{Low rank approximation}
For the design problem defined in section \ref{sensor}, we denote $\bF_d$ as the discretized parameter-to-observable map at all the $d$ candidate sensor locations. Then for a specific design $W$, we have
\begin{equation}\label{eq:FFstar}
\bF = W\bF_d \quad \text{ and } \bF^T = \bF_d^T W^T.
\end{equation}
We present an efficient approximation of the discrete EIG given in \eqref{eq:EIGdiscrete} and establish an error estimate for the approximation. To start, we need the following results, which are proven in \cite{AlexanderianSaibaba18} for Proposition \ref{prop:1} and in \cite{Pozrikidis14} for Proposition \ref{prop:WA}.
\begin{prop}\label{prop:1}
Let $A,B \in \mathbb{C}^{n \times n}$ be Hermitian positive semidefinite with $A \geq B$, then
\begin{equation}
0 \leq \log \det (I+A) -\log \det (I+B)  \leq \log \det (I+A-B).
\end{equation}
\end{prop}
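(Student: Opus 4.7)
The plan is to prove the two inequalities separately; both reduce to standard facts about the Loewner order and monotonicity of $\log\det$.

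For the left inequality, I would rely on the fact that $\log\det$ is monotone nondecreasing on the cone of Hermitian positive definite matrices with respect to the Loewner order. Since $A \succeq B \succeq 0$, we have $I + A \succeq I + B \succ 0$, and by Weyl's monotonicity principle each ordered eigenvalue of $I + A$ dominates the corresponding eigenvalue of $I + B$. Taking the product of eigenvalues and then the logarithm yields $\log\det(I+A) \geq \log\det(I+B)$, as required.

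For the right inequality, I would set $C := A - B$, which is Hermitian PSD by hypothesis, and rewrite the target as
\begin{equation*}
\det(I + B + C) \leq \det(I+B)\,\det(I+C).
\end{equation*}
The key trick is to introduce the one-parameter family
\begin{equation*}
\phi(t) := \log\det(I + tB + C) - \log\det(I + tB), \qquad t \in [0,1],
\end{equation*}
so the desired inequality becomes $\phi(1) \leq \phi(0)$. By Jacobi's formula,
\begin{equation*}
\phi'(t) = \operatorname{tr}\!\bigl( \bigl[(I + tB + C)^{-1} - (I + tB)^{-1}\bigr] B \bigr).
\end{equation*}
Because matrix inversion is operator antitone on the positive definite cone and $I + tB + C \succeq I + tB \succ 0$, the bracketed difference $M(t)$ is Hermitian negative semidefinite. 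Since $B \succeq 0$, the cyclic identity $\operatorname{tr}(M(t)B) = \operatorname{tr}(B^{1/2} M(t) B^{1/2}) \leq 0$ shows $\phi$ is nonincreasing on $[0,1]$, which gives $\phi(1) \leq \phi(0)$ and completes the proof.

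The only mildly delicate step is justifying $\phi'(t) \leq 0$, which rests on the operator antitonicity of inversion on the PD cone; this is standard and usually established via the spectral theorem or the integral representation $X^{-1} = \int_0^\infty e^{-tX}\,dt$ applied on the PD cone. Everything else is routine manipulation of $\log\det$ and traces, and no new ingredient beyond Hermitian PSD calculus is needed.
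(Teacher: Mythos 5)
Your proof is correct. Note that the paper itself does not prove this proposition; it simply cites \cite{AlexanderianSaibaba18}, so your argument is necessarily a different (self-contained) route. The left inequality via Loewner monotonicity of $\log\det$ is exactly the standard argument. For the right inequality, your interpolation $\phi(t) = \log\det(I+tB+C) - \log\det(I+tB)$ with Jacobi's formula and the antitonicity of matrix inversion is a clean and valid derivative-based proof: $M(t) = (I+tB+C)^{-1} - (I+tB)^{-1} \preceq 0$ and $\operatorname{tr}(B^{1/2}M(t)B^{1/2}) \leq 0$ indeed give $\phi' \leq 0$, and $\phi(0) = \log\det(I+C)$, $\phi(1) = \log\det(I+A) - \log\det(I+B)$ as needed. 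The more common algebraic route (and the flavor of the cited reference) factors $I+B+C = (I+B)^{1/2}\bigl(I + (I+B)^{-1/2}C(I+B)^{-1/2}\bigr)(I+B)^{1/2}$ and then uses $C^{1/2}(I+B)^{-1}C^{1/2} \preceq C$ together with Weyl monotonicity to bound the second determinant by $\det(I+C)$; that avoids differentiation entirely but uses the same two ingredients (congruence and Loewner order). One small quibble: your parenthetical claim that inversion antitonicity follows from the representation $X^{-1} = \int_0^\infty e^{-tX}\,dt$ is not a workable justification, since $X \mapsto e^{-tX}$ is not operator antitone; the standard proof is by congruence, i.e., $X \preceq Y$ implies $Y^{-1/2}XY^{-1/2} \preceq I$, hence $Y^{1/2}X^{-1}Y^{1/2} \succeq I$, hence $X^{-1} \succeq Y^{-1}$. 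This does not affect the validity of your argument, since the lemma itself is standard and true.
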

\begin{prop}\label{prop:WA}
Let $A$ and $B$ be matrices of size $m \times n$ and $n \times m$ respectively, then
  \begin{equation}\label{eq:WA}
\det (\mathbf{I}_{n\times n} + BA) = \det (\mathbf{I}_{m\times m} + AB), 
  \end{equation}
  which is known as Weinstein-Aronszajn identity.
\end{prop}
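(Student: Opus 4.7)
The plan is to reduce the identity to the evaluation of the determinant of a single $(m+n)\times(m+n)$ block matrix in two different ways, a standard trick that avoids any spectral argument and handles the case $m \neq n$ seamlessly. Concretely, I would consider
\begin{equation*}
M = \begin{pmatrix} \mathbf{I}_{m\times m} & -A \\ B & \mathbf{I}_{n\times n} \end{pmatrix},
\end{equation*}
and compute $\det(M)$ by two different block-LU factorizations.

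First, I would multiply $M$ on the right by the unit upper-triangular block matrix that zeroes out $-A$, namely
\begin{equation*}
M \begin{pmatrix} \mathbf{I}_{m\times m} & A \\ 0 & \mathbf{I}_{n\times n} \end{pmatrix}
 = \begin{pmatrix} \mathbf{I}_{m\times m} & 0 \\ B & \mathbf{I}_{n\times n} + BA \end{pmatrix}.
\end{equation*}
Since the triangular factor has determinant $1$ and the resulting block-triangular matrix has determinant $\det(\mathbf{I}_{n\times n}+BA)$, this gives $\det(M) = \det(\mathbf{I}_{n\times n}+BA)$. Symmetrically, multiplying $M$ on the left by the unit lower-triangular block matrix that zeroes out $B$,
\begin{equation*}
\begin{pmatrix} \mathbf{I}_{m\times m} & 0 \\ -B & \mathbf{I}_{n\times n} \end{pmatrix} M
 = \begin{pmatrix} \mathbf{I}_{m\times m} + AB & -A \\ 0 & \mathbf{I}_{n\times n} \end{pmatrix},
\end{equation*}
which yields $\det(M) = \det(\mathbf{I}_{m\times m}+AB)$. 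Equating the two values of $\det(M)$ gives \eqref{eq:WA}.

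There is really no substantive obstacle here; the only thing to be careful about is bookkeeping the sizes of the identity blocks so that the factorizations make sense when $m\neq n$, and checking that the triangular factors used on each side have determinant exactly one so no extraneous sign or scalar enters. An alternative route would be to invoke the fact that $AB$ and $BA$ have the same nonzero eigenvalues (with the same algebraic multiplicities), which immediately implies equality of $\det(\mathbf{I}+AB)$ and $\det(\mathbf{I}+BA)$; I would keep this in reserve as a cross-check but present the block-matrix argument as the main proof because it is self-contained and avoids any appeal to spectral theory.
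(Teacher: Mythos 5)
The paper does not actually prove this proposition; it simply cites an external reference for it, so there is no in-paper argument to compare against. Your block-determinant strategy is the standard, self-contained way to establish the Weinstein--Aronszajn identity and is the right approach. Your first factorization is correct. However, the second display is wrong as written: left-multiplying $M$ by $\begin{pmatrix} \mathbf{I}_{m\times m} & 0 \\ -B & \mathbf{I}_{n\times n}\end{pmatrix}$ does zero out the $B$ block, but the product is
\begin{equation*}
\begin{pmatrix} \mathbf{I}_{m\times m} & 0 \\ -B & \mathbf{I}_{n\times n}\end{pmatrix}
\begin{pmatrix} \mathbf{I}_{m\times m} & -A \\ B & \mathbf{I}_{n\times n}\end{pmatrix}
=
\begin{pmatrix} \mathbf{I}_{m\times m} & -A \\ 0 & \mathbf{I}_{n\times n}+BA\end{pmatrix},
\end{equation*}
which merely reproduces $\det(M)=\det(\mathbf{I}_{n\times n}+BA)$ and never produces $\mathbf{I}_{m\times m}+AB$. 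The matrix you wrote on the right-hand side is what you get by multiplying $M$ on the \emph{right} by that same factor (a column operation), i.e.\ $M\begin{pmatrix} \mathbf{I}_{m\times m} & 0 \\ -B & \mathbf{I}_{n\times n}\end{pmatrix}=\begin{pmatrix} \mathbf{I}_{m\times m}+AB & -A \\ 0 & \mathbf{I}_{n\times n}\end{pmatrix}$; equivalently you could left-multiply by $\begin{pmatrix} \mathbf{I}_{m\times m} & A \\ 0 & \mathbf{I}_{n\times n}\end{pmatrix}$. With either correction the argument closes and the proof is complete. One further remark on your reserve option: the paper's own Proposition 4.2 proves that $AB$ and $BA$ share nonzero eigenvalues via an eigenvector argument, but that argument does not track algebraic multiplicities, so it does not by itself rigorously yield the determinant identity; your block-matrix route is genuinely the cleaner and more self-contained one here.
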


\begin{thm}\label{thm:low-rank-EIG}
	Let $\mathbf{H}_d: =\hat{ \bF}_d  \, \bCpr\,  \hat{\bF}_d^T  \in \bR^{d\times d}$ with $\hat{\bF}_d =  (\Gnoise^d)^{-\frac{1}{2}}  \bF_d$, where $\Gnoise^d$ is defined in \eqref{eq:Gamma_n_d}. We compute a low-rank decomposition of $\mathbf{H}_d$ as 
	\begin{equation}\label{eq:Hdhat}
	\hat{\mathbf{H}}_d = U_k \Sigma_k U_k^T,
	\end{equation}
	where $(\Sigma_k, U_k)$ represent the $k$ dominant eigenpairs, with $\Sigma_k = \text{diag}(\lambda_1, \dots, \lambda_k)$ for eigenvalues $\lambda_1 \geq \cdots \geq \lambda_k$. Moreover, let $\hat{ \Psi }$ denote an approximate EIG defined as  
	\begin{equation}\label{eq:Psihat}
	\hat{ \Psi }(W) := \frac{1}{2}\logdet{\mathbf{I}_{r\times r}+  W \hat{\mathbf{H}}_d W^T}.
	\end{equation}
	Then we have
	\begin{equation}\label{eq:boundPsi}
	0 \leq  \Psi(W) -\hat{ \Psi }(W) \leq \frac{1}{2} \sum_{i = k+1}^d \log(1+\lambda_i),
	\end{equation}
	where $\lambda_{k+1}, \dots, \lambda_d$ are the remaining eigenvalues of $\mathbf{H}_d$.
\end{thm}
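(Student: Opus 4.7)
The plan is to first express $\Psi(W)$ itself as a log-det on the $r$-dimensional observation side using Proposition \ref{prop:WA}, then control the truncation error by combining Proposition \ref{prop:1} with the projection property of $W^T W$.

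First I would exploit the row-selection structure of $W$ together with the diagonality of $\Gnoise^d$ in \eqref{eq:Gamma_n_d} to verify the identity $W(\Gnoise^d)^{-\frac{1}{2}} = (W\Gnoise^d W^T)^{-\frac{1}{2}} W$. Combined with $\bF = W\bF_d$, this shows that under the design $W$ the whitened forward map is $\Gnoise^{-\frac{1}{2}}\bF = W\hat{\bF}_d$, with the effective noise covariance $\Gnoise = W\Gnoise^d W^T$. Consequently
\begin{equation*}
\bpHmisfit = \bCpr^{\frac{1}{2}}\bF^T \Gnoise^{-1} \bF \bCpr^{\frac{1}{2}} = \bigl(\bCpr^{\frac{1}{2}}\hat{\bF}_d^T W^T\bigr)\bigl(W\hat{\bF}_d \bCpr^{\frac{1}{2}}\bigr).
\end{equation*}
Applying Proposition \ref{prop:WA} to swap the sides of this product inside $\det(\mathbf{I}+\cdot)$ yields
\begin{equation*}
\Psi(W) = \frac{1}{2}\logdet{\mathbf{I}_{r\times r} + W\mathbf{H}_d W^T},
\end{equation*}
which is the $r\times r$ analogue of \eqref{eq:Psihat} but with the full $\mathbf{H}_d$ in place of its truncation.

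Next, since $\hat{\mathbf{H}}_d$ is obtained by zeroing the trailing eigenvalues of the symmetric PSD matrix $\mathbf{H}_d$, the residual $\mathbf{H}_d-\hat{\mathbf{H}}_d = U_{>k}\Sigma_{>k} U_{>k}^T$ is itself PSD, with $U_{>k} = [u_{k+1},\dots,u_d]$ and $\Sigma_{>k} = \text{diag}(\lambda_{k+1},\dots,\lambda_d)$. Congruence by $W$ preserves the Loewner order, so $W\mathbf{H}_d W^T \geq W\hat{\mathbf{H}}_d W^T$, and Proposition \ref{prop:1} delivers both the nonnegativity $\Psi(W)-\hat{\Psi}(W)\geq 0$ and the estimate
\begin{equation*}
\Psi(W)-\hat{\Psi}(W) \leq \frac{1}{2}\logdet{\mathbf{I}_{r\times r} + W(\mathbf{H}_d-\hat{\mathbf{H}}_d)W^T}.
\end{equation*}

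Finally I would reduce the right-hand side to a sum over trailing eigenvalues. Writing the residual as the outer product $(W U_{>k}\Sigma_{>k}^{1/2})(W U_{>k}\Sigma_{>k}^{1/2})^T$ and invoking Proposition \ref{prop:WA} once more collapses the log-det onto a $(d-k)\times(d-k)$ determinant involving $\Sigma_{>k}^{1/2}U_{>k}^T W^T W U_{>k}\Sigma_{>k}^{1/2}$. Because $W^T W$ is an orthogonal projection in $\bR^d$, we have $W^T W \leq \mathbf{I}_d$, hence $U_{>k}^T W^T W U_{>k} \leq U_{>k}^T U_{>k} = \mathbf{I}_{d-k}$; conjugating by $\Sigma_{>k}^{1/2}$ and invoking monotonicity of $X\mapsto \logdet{\mathbf{I}+X}$ on PSD matrices yields the desired $\sum_{i=k+1}^d \log(1+\lambda_i)$ bound. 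The only delicate step is the initial bookkeeping of the noise whitening with the row-selector $W$ — it is crucial that $\Gnoise^d$ is diagonal, which is the standing assumption for uncorrelated sensor noise; once $\Psi(W)$ has been brought into $r\times r$ form, the two applications of Propositions \ref{prop:WA} and \ref{prop:1} do all the remaining work.
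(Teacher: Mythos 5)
Your proposal is correct and follows essentially the same route as the paper's proof: rewrite $\Psi(W)$ as an $r\times r$ log-determinant via the Weinstein--Aronszajn identity, bound the truncation error with Proposition \ref{prop:1}, apply Weinstein--Aronszajn once more to the factored residual, and finish with $W^TW \leq \mathbf{I}_d$ and the orthonormality of the trailing eigenvectors. The only additions are cosmetic — you make explicit the commutation $W(\Gnoise^d)^{-\frac12} = (W\Gnoise^d W^T)^{-\frac12}W$ that the paper uses silently, and you correctly identify the final determinant as $(d-k)\times(d-k)$ where the paper's displayed equations carry a $(d-r)$ typo.
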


\begin{proof}
For the definition of $\bpHmisfit = \bCpr^{\frac{1}{2}}\bF^T\Gnoise^{-1}\bF\bCpr^{\frac{1}{2}}$, we first denote $\hat{\bF} = \Gnoise^{-\frac{1}{2}}\bF$. Then using \eqref{eq:FFstar} we have 
\begin{equation}
\hat{\bF} = \Gnoise^{-\frac{1}{2}}W \bF_d = W (\Gnoise^d)^{-\frac{1}{2}} \bF_d = W \hat{\bF}_d.
\end{equation}
 We can write the form of EIG in \eqref{eq:EIGdiscrete} as 
\begin{equation}
\begin{split}
{\Psi}(W)&  = \frac{1}{2}\logdet{\mathbf{I}_{n\times n}+  \bCpr^{\frac{1}{2}}\bF^T\Gnoise^{-1}\bF\bCpr^{\frac{1}{2}} }\\
&= \frac{1}{2}\logdet{\mathbf{I}_{n\times n}+  \bCpr^{\frac{1}{2}}\hat{\bF}^T\hat{\bF}\bCpr^{\frac{1}{2}} }\\
&= \frac{1}{2}\logdet{\mathbf{I}_{n\times n} +  \bCpr^{\frac{1}{2}} \,\hat{\bF}_d^T W^T W\hat{\bF}_d  \, \bCpr^{\frac{1}{2}} },
\end{split}
\end{equation} 
where in the second equality we use the relation \eqref{eq:FFstar}.

By the Weinstein-Aronszajn identity \eqref{eq:WA}, we have
\begin{equation}
{\Psi}(W)  =  \frac{1}{2}\logdet{\mathbf{I}_{r\times r} + W\hat{\bF}_d  \, \bCpr^{\frac{1}{2}} \bCpr^{\frac{1}{2}} \, \hat{\bF}_d^T W^T }
\end{equation}

\begin{equation}
 =  \frac{1}{2}\logdet{\mathbf{I}_{r\times r}  +  W\mathbf{H}_d W^T},
\end{equation}
where in the second equality we use the definition of $\mathbf{H}_d$. We denote the eigenvalue decomposition of $\mathbf{H}_d$ as 
\begin{equation}
\mathbf{H}_d = U_k \Sigma_k U_k^T + U_\perp \Sigma_\perp U_\perp^T,
\end{equation}
where $(\Sigma_k, U_k)$ represent the $k$ dominant eigenpairs, and $(\Sigma_\perp, U_\perp)$ represent the remaining $d - k$ eigenpairs. Then by definition of the EIG approximation $\hat{ \Psi }(W)$ in \eqref{eq:Psihat}, we have 
\begin{equation}
\begin{split}
 \Psi(W) -\hat{ \Psi }(W)  &= 
 \frac{1}{2}  \logdet{\mathbf{I}_{r\times r}  +  W\mathbf{H}_d W^T }- \frac{1}{2}  \logdet{\mathbf{I}_{r\times r}+  W \hat{\mathbf{H}}_d  W^T}\\
 & \leq \frac{1}{2}\logdet{\mathbf{I}_{r\times r}  +  W\mathbf{H}_d W^T - W \hat{\mathbf{H}}_d W^T}\\
 & = \frac{1}{2}\logdet{\mathbf{I}_{r\times r}  +  W U_\perp \Sigma_\perp U_\perp^T W^T}\\
 & = \frac{1}{2}\logdet{\mathbf{I}_{r\times r}  +  W U_\perp \Sigma_\perp^{1/2} \Sigma_\perp^{1/2} U_\perp^T W^T}\\
& =  \frac{1}{2}\logdet{\mathbf{I}_{(d-r)\times (d-r)}  +\Sigma_\perp^{1/2} U_\perp^T W^T W U_\perp \Sigma_\perp^{1/2} },
\end{split}
\end{equation}
 where we use Proposition \ref{prop:1} for the inequality, and Proposition \ref{prop:WA} for the last equality. By definition of the design matrix $W$ in \eqref{eq:W}, we have that
$ W^T W \in \bR^{d \times d}$ is a square diagonal matrix with $r$ diagonal entries as one and the others zero, which satisfies $W^T W \leq\mathbf{I}_{d\times d}$. Consequently, we have 
\begin{equation}
\Psi(W) -\hat{ \Psi }(W) \leq \frac{1}{2} \logdet{\mathbf{I}_{(d-r)\times (d-r)}  +\Sigma_\perp^{1/2} U_\perp^T U_\perp \Sigma_\perp^{1/2}} = \frac{1}{2} \logdet{\mathbf{I}_{(d-r)\times (d-r)}  +\Sigma_\perp}, 
\end{equation}
where we use the orthonormality $U_\perp^T U_\perp = \mathbf{I}_{(d-r)\times (d-r)}$ for the eigenvectors in the equality. This concludes the upper bound for $ \Psi(W) -\hat{ \Psi }(W)$. The lower bound in \eqref{eq:boundPsi} is implied by Proposition \ref{prop:1} and $\mathbf{H}_d \geq \hat{\mathbf{H}}_d$.
\end{proof}

The Hessian $\mathbf{H}_d$ is a large dense matrix which usually has a rapidly-decaying spectrum. The low-rank approximation \eqref{eq:Hdhat} can be efficiently computed by a randomized singular-value-decomposition(SVD) algorithm. It is a flexible and robust method that only requires Hessian action in a limited number ($O(k+p)$) of given directions instead of forming the full Hessian matrix \cite{HalkoMartinssonTropp11}, particularly useful for the large-scale problems, see Algorithm \ref{alg:RSVD}.  

\begin{algorithm}[H]
	\small
	\caption{Randomized SVD to compute \eqref{eq:Hdhat}}
	\label{alg:RSVD}
	\begin{algorithmic}[1]
		\STATE Generate  i.i.d. Gaussian matrix $\bs{\Omega} \in \mathbb
		R^{d \times (k+p)}$ with an oversampling parameter $p$ very small (e.g., $p = 10$). 
		\STATE  Compute $\bs{Y} = \mathbf{H}_d \bs{\Omega} .$ 
		\STATE  Compute the QR factorization $\bs{Y} = \bs{Q}\bs{R}$ satisfying $\bs{Q}^T \bs{Q} = \bs{I}$.
		\STATE Compute $\bs{B} = \bs{Q}^T \mathbf{H}_d  \bs{Q}$. 
		\STATE Solve an eigenvalue problem for $\bs{B}$ such that $\bs{B} = \bs{Z} \bs{\Sigma} \bs{Z}^T$.
		\STATE Form $U_k = \bs{Q}\bs{Z}[1:k]$ and $\Sigma_k =\bs{\Sigma}[1:k, 1:k]$.
	\end{algorithmic}
\end{algorithm}

We can see that it is a matrix-free eigen-solver with both step $2$ and $4$ requires $O(k+p)$ independent Hessian-vector products. Each Hessian-vector product involves a pair of forward and adjoint PDE solves. For many problems, the rank $k$ is independent of the parameter dimension and data dimension. Therefore, the dominant computational cost, i.e., the number of PDE solves is independent of the parameter dimension and data dimension.
%

Once the low-rank approximation \eqref{eq:Hdhat} is computed, the approximate EIG in \eqref{eq:Psihat} does not involve solution of the forward model. In the online stage, we only need to find a design matrix $W$ such that 
\begin{equation}
\label{eq:Wproblem}
W = \argmax_{W \in  \mathcal{W} }\hat{\Psi}(W).
\end{equation}


\subsection{Greedy algorithms}
The problem \eqref{eq:Wproblem} can also be treated as the optimization of set functions under cardinality constraint set as each design matrix $W$ represent a choice of subset of candidate sensor location . The set functions usually exhibit properties such as submodularity(Definition \ref{def:1}) that allow for efficient optimization methods\cite{Fujishige05}. It is well-know that the log-determinant function is a submodular function\cite{HanKambadurParkEtAl17} with the definition of submodularity given as follows.
\begin{defn}
\label{def:1}
Let $f$ be a set function on $\mathcal{V}$, i.e ., $f: 2^{\mathcal{V}} \to \bR$. Then $f$ is submodular if for every $A,B \subset \mathcal{V}$ with $A \subset B$ and every $v \in \mathcal{V}\setminus B$, we have $f(A \cup \{ v\}) - f(A) \geq f(B \cup \{ v\}) - f(B)$.
\end{defn}
Submodularity is a useful property of set functions with deep theoretical results and applications in combinational optimization and machine learning\cite{Bachothers13,KrauseGolovin14,WeiIyerBilmes15,NemhauserWolseyFisher78}. 
 The problem of submodular function maximization is a classic NP-hard problem. A general approach of solving it with cardinality constraints is a greedy algorithm. A celebrated result by \cite{NemhauserWolseyFisher78} proves that greedy algorithms can provide a good approximation to the optimal solution of the NP-hard optimization problem despite its simplicity.\\
 The standard greedy algorithm has theoretical guarantees for maximizing submodular functions.  Recently, there are some investigations on the theoretical support for its performance for non-submodular functions. The greedy algorithm enjoys strong empirical performance and provides near-optimal solutions in practice for many submodular and non-submodular functions as shown in \cite{BianBuhmannKrauseEtAl17, JagalurMohanMarzouk20}. Yet the repeated function evaluations required to compute object function in each greedy step may comprise a large computational cost if each function evaluation is expensive. \\
The following Algorithm~\ref{alg:greedy1} is the standard greedy algorithm that starting with the empty set $S^0$, during each iteration $t$, the element maximizing the function is added to the chosen sensor set $S^t$. We denote it as standard greedy algorithm here.\\
  \begin{algorithm}[H]
\small
   \caption{A standard greedy algorithm}
   \label{alg:greedy1}
\begin{algorithmic}[1]
  \STATE {\bfseries Input}: $U_k, \Sigma_k$ in \eqref{eq:Hdhat}, a set $S$ of $d$ candidate sensors, a budget of $r$ sensors to place, an initial set $S^0 = \emptyset$.
   \FOR{$t = 1,\dots,r$}
   \STATE $v^* \leftarrow \arg \max_{v \in S \setminus S^{t-1}}\hat{ \Psi }(v)$, 
   where $ \hat{ \Psi }(v) := \frac{1}{2}\logdet{\mathbf{I}+  W_v U_k \Sigma_k U_k^T W_v^T }$, 
   \\ for the design matrix $W_v$ corresponding to the sensor set $S^{t-1} \cup \{ v\}$.
   \STATE $S^t \leftarrow S^{t-1} \cup \{ v^*\}$
  \ENDFOR
  \STATE {\bfseries Output:} optimal sensor choice $S^r$
    \end{algorithmic}
\end{algorithm}

We can see with each sensor selection, we apply $W$ on the eigenvector matrix $U_k$ to actually select the rows of $U_k$, which can be treated as a row-selection problem to find a subset of rows that captures $U_k$ as much as possible.  This quantity has a nature interpretation in terms of statistical leverage(Definition \ref{def:2})\cite{PapailiopoulosKyrillidisBoutsidis14} and it has been used extensively to identify ``outlying", or we can say in our problem, more ``informative" data point\cite{ChatterjeeHadi09,JagalurMohanMarzouk20}. If we employ this leverage score as a bias toward more ``informative" rows, which then provide a ``nice" starting point for the selection in greedy algorithm\cite{BoutsidisMahoneyDrineas08}.
\begin{defn}
\label{def:2}
Let $U_k \in \bR^{d \times k}$ contain the eigenvectors corresponding to the $k$ dominant eigenvalues of a $d \times d$ symmetric matrix $A$ with rank$(A)\geq k$.  Then the (rank-$k$) leverage score of the $i$-th row of $A$ is defines as $l^{k}_{i} = \norm{[U_k]_{i,:}}^2$ for $i = 1,\cdots,n$ where $[U_k]_{i,:}$ denotes the $i$-th row of $U_k$.
\end{defn}
Instead of choosing the sensors one-by-one as in Algorithm \ref{alg:greedy1}, we use the leverage score information from $U_k$ as a criterion to select the sensors. We first choose the $r$ rows that have the top-$r$ leverage scores of $U_k$ as the initial sensor set $S^0$.  At each iteration $t =  1, \dots, r$, we swap the $t$-th sensor in $S^t$ with one from $S\setminus S^t$ that maximizes the leverage score. Then we set the resulting sensor set $S^r$ as $S^0$ and repeat the whole process of swapping sensors until it converges that no sensor is changed. We call it swapping greedy algorithm.

  \begin{algorithm}[H]
\small
   \caption{A swapping greedy algorithm}
   \label{alg:greedy2}
\begin{algorithmic}[1]
  \STATE {\bfseries Input}: $U_k, \Sigma_k$ in \eqref{eq:Hdhat}, a set $S$ of $d$ candidate sensors, a budget of $r$ sensors to place.
  \STATE Compute the initial guess sensor set $S^* =\{s_1,\dots,s_r\} \subset S$ based on leverage scores of $U_k$. $S^0 = \{ \emptyset\}$.
  \WHILE{$S^* \neq S^0$}
   \STATE $S^0 \leftarrow S^*$.
   \FOR{$t = 1,\dots,r$}
   \STATE $v^* \leftarrow \arg \max_{v \in \{ s_t\} \cup (S \setminus S^{t-1})}\hat{ \Psi }(W) = \frac{1}{2}\logdet{\mathbf{I}+  W_v U_k \Sigma_k U_k^T W_v^T} $, \\
   $W_v$ is the design matrix for the sensor choice $S^{t-1}  \setminus  \{ s_t\}\cup \{ v\}$.
   \STATE $S^t \leftarrow (S^{t-1}  \setminus  \{ s_t\})\cup \{ v^*\}$.
  \ENDFOR
   \STATE $S^* \leftarrow S^r$.
  \ENDWHILE
  \STATE {\bfseries Output:} optimal sensor choice $S^*$.
    \end{algorithmic}
\end{algorithm}

%

\section{Nonlinear Bayesian inverse problems}
\label{sec:nonlinear}

We consider Bayesian nonlinear inverse problem in this section, where the parameter-to-observable map $\F(m)$ is nonlinear with respect to the parameter. For such problems we do not have the data-independent EIG as in \eqref{eq:EIGlinear}, and need to use a data-averaged KL divergence by a sample average approximation as 
\begin{eqnarray}\label{eq:Phiapprox}
\Psi  \approx \frac{1}{N_s} \sum_{i=1}^{N_s} D_{\text{KL}}(\mupost(\cdot
| \obs_i) \| \mupr ), 
\end{eqnarray}
where the data $\obs_i$ are given by
\begin{equation}\label{eq:obsi}
\obs_i = \mathcal{F}( \ipar_i) + \obsn_i,
\end{equation}
with $N_s$ i.i.d.\ samples $ \ipar_i \sim
\mupr$ and $\obsn_i \sim\mathcal{N}(0,\Gnoise) $, $i = 1, \dots, N_s$. 
 This involves the critical challenges of (1) computation of the posterior for each data realization, (2) an high-dimensional integral for the KL divergence, and, (3) a complex and implicit dependence of the EIG on the design matrix through the solution of the Bayesian nonlinear inverse problem. To tackle these challenges,  beyond the low-rank approximation of the Hessian introduced in last section, we propose a sequence of further approximations including (1) Laplace approximation of the posterior for each data realization, which leads to an efficient computation of the posterior, (2) an approximation of a varying MAP point for each configuration of the sensors by a fixed MAP point obtained with data from all candidate sensors, which prevents solution of the nonlinear inverse problems at each configuration of the sensors, and, (3) an approximation of the fixed MAP point by the synthetic sample drawn from the prior, which further gets rid of finding the MAP point for the nonlinear inverse problem. We demonstrate the accuracy and efficiency of the proposed approximations and employ them in the greedy algorithms for scalable and efficient optimization of the design matrix.

\subsection{Laplace approximation}
Laplace approximation (LA) seeks an approximation of the nonlinear Bayesian posterior $\mupost$ for given data $\obs$ by a Gaussian distribution $ \mupost \approx \mupost^{\text{LA}}$, with the mean given by the MAP point $\map$ as
\begin{equation}\label{eq:Laplace-mean}
\map :=  \operatornamewithlimits{argmin}_{\ipar \in \ParSpace}\tfrac12 \| \F (\ipar) - \obs
 \|^2_{\Gnoise^{-1}} + 
\tfrac{1}{2}\| \ipar - \prmean \|^2_{\Cpr^{-1}},
\end{equation}
and the covariance $\Cpost$ is given by 
\begin{equation}\label{eq:Laplace-cov}
\Cpost^{-1} : =  \Hmisfit(\map) + \Cpr^{-1},
\end{equation}
where $\Hmisfit(\map)$ is the Hessian of the misfit term $\tfrac12 \| \F (\ipar) - \obs
\|^2_{\Gnoise^{-1}}$ w.r.t.\ the parameter $m$, evaluated at the MAP point $\map$.
For efficient computation of the EIG, we employ a Gauss--Newton approximation of the Hessian as (with slight abuse of notation)
\begin{equation}
 \Hmisfit(\map) := \mathcal{J}^*(\map) \, \Gnoise^{-1} \,
\mathcal{J}(\map),
\end{equation}
where $\mathcal{J}(\map)$ is the Jacobian of the parameter-to-observable map $\F (\ipar)$ w.r.t.\ $\ipar$ evaluated at $\map$, i.e., 
\begin{equation}\label{eq:Jacobian}
\mathcal{J}(\map) : = \nabla_m \F (\ipar)|_{m = \map}.
\end{equation}
By the Laplace approximation of the posterior, similar to \eqref{eq:KL-linear}, the analytical form of the KL divergence is given as 
\begin{equation}\label{eq:KL-nonlinear}
D_{\text{KL}}(\mupost^{\text{LA}} \| \mupr ) =
\frac{1}{2} \left[\logdet{\mathcal{I} + \pHmisfit} 
- \trace{\Cpost^\frac{1}{2} \Hmisfit \Cpost^\frac12}
+ \|\map - \prmean \|^2_{\Cpr^{-1}}\right],
\end{equation}  
where $ \pHmisfit = \Cpr^\frac{1}{2} \, \Hmisfit \, \Cpr^\frac12 $.
By a finite dimensional discretization as presented in Section \ref{sec:discretization}, we obtain the discrete Laplace approximation with mean $\bmumap$ as the coefficient vector of $\map$, and the inverse of the covariance matrix
\begin{equation}\label{eq:discrete-PostCov}
\begin{split}
\bpostcov^{-1} & = \bHmisfit(\bmumap)+ \bCpr^{-1} = \bJ^T(\bmumap) \Gnoise^{-1} \bJ(\bmumap) + \bCpr^{-1},
\end{split}
\end{equation}
where $\bJ$ is the discretization of the Jacobian $\mathcal{J}$ in \eqref{eq:Jacobian}, whose efficient computation is deferred to Section \ref{sec:complexity}. Moreover, we have the discrete KL divergence corresponding to \eqref{eq:KL-nonlinear} given by (with slight abuse of notation) 
\begin{equation}\label{eq:KL-NonlinearDiscrete}
D_{\text{KL}}( \mupost^{\text{LA}} \| \mupr ) = \frac{1}{2}\left[\logdet{\mathbf{I} + \bpHmisfit(\bmumap)} 
  - \trace{\bpostcov ^\frac{1}{2} \bHmisfit(\bmumap) \bpostcov ^\frac12}
  + \|\bmumap - \bmupr \|^2_{\bCpr^{-1}}\right],
\end{equation}
where $\bpHmisfit (\bmumap)$ is the prior-preconditioned Hessian given by 
\begin{equation}\label{eq:prior-pre-Hessian}
\bpHmisfit (\bmumap)= \bCpr^{\frac{1}{2}}\bHmisfit (\bmumap)\bCpr^{\frac{1}{2}}.
\end{equation}

\subsection{Low rank approximation}

By the Laplace approximation of the Bayesian posterior at each given data $\obs_i$, $i = 1, \dots, N_s$, we can compute the EIG similar to that for the linear inverse problem by first computing a low rank approximation of the prior-preconditioned Hessian \eqref{eq:prior-pre-Hessian}. More specifically, we need to compute the eigenvalues of $\bpHmisfit$ at the MAP points $\bmumap^i$ for each $i = 1, \dots, N_s$. 
Let $\bJ_d$ denote the Jacobian at all $d$ candidate sensor locations evaluated at $\bmumap^i$,
then we have 
\begin{equation}
\bJ = W \bJ_d \quad \text{ and } \quad  \bJ^T = \bJ^T_d W^T.
\end{equation}
As in Theorem \ref{thm:low-rank-EIG}, by introducing $\hat{\bJ}_d =  (\Gnoise^d)^{-\frac{1}{2}}  \bJ_d$,
we have
\begin{equation}
\label{eq:hessian}
\bpHmisfit = \bCpr^{\frac{1}{2}} \bJ^T_d W^T \Gnoise^{-1} W \bJ_d \bCpr^{\frac{1}{2}} = \bCpr^{\frac{1}{2}} \hat{\bJ}^T_d W^T W \hat{\bJ}_d \bCpr^{\frac{1}{2}}.
\end{equation}
To compute the eigenvalues of $\bpHmisfit$, we use the following result.
\begin{prop}\label{prop:2}
Let $A$ and $B$ be matrices of size $m \times n$ and $n \times m$ respectively, then $AB$ and 
$BA$ have the same non-zero eigenvalues.
\end{prop}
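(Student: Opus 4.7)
The plan is to prove the equality of nonzero eigenvalues by tracking eigenvectors directly. Suppose $\lambda \neq 0$ is an eigenvalue of $AB$ with eigenvector $v \in \bR^m$, $v \neq 0$, so that $(AB)v = \lambda v$. Left-multiplying by $B$ gives
\begin{equation*}
(BA)(Bv) = B(AB)v = \lambda (Bv),
\end{equation*}
which suggests that $Bv$ is an eigenvector of $BA$ associated with the same eigenvalue $\lambda$.

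The only subtlety is verifying that $Bv \neq 0$, so that $Bv$ is a genuine eigenvector and not merely a trivial witness. This is where the assumption $\lambda \neq 0$ enters: if $Bv = 0$, then $\lambda v = A(Bv) = 0$, forcing $\lambda = 0$ since $v \neq 0$, which contradicts $\lambda \neq 0$. Hence $Bv \neq 0$ and is an eigenvector of $BA$ with eigenvalue $\lambda$. By swapping the roles of $A$ and $B$ (the argument is perfectly symmetric, with $A$ playing the role of $B$ and vice versa), every nonzero eigenvalue of $BA$ is likewise a nonzero eigenvalue of $AB$. This establishes the set equality of nonzero spectra.

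I expect no real obstacle here; the only pitfall to flag is to remember to invoke $\lambda \neq 0$ when ruling out $Bv = 0$, since the claim genuinely fails at $\lambda = 0$ (e.g.\ when $m \neq n$, the zero eigenvalue has different geometric multiplicities in $AB$ versus $BA$). If a stronger statement about algebraic multiplicities is needed later, one can instead invoke the Weinstein--Aronszajn identity (Proposition~\ref{prop:WA}) with $A$ replaced by $A/(-\lambda)$ to obtain $\lambda^{m-n}\det(\lambda I_n - BA) = \det(\lambda I_m - AB)$ for $\lambda \neq 0$, which shows $AB$ and $BA$ share the same nonzero characteristic roots with the same multiplicities. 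For the purposes of the current proposition, however, the short eigenvector argument above is the cleanest route.
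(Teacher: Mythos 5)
Your proof is correct and follows essentially the same route as the paper's: take an eigenvector for a nonzero eigenvalue of one product, multiply by the other factor to transport it to the other product, and symmetrize. You are in fact slightly more careful than the paper in explicitly verifying that the transported vector is nonzero, which is the one point the paper's argument leaves implicit.
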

\begin{proof}
	Let $v$ be an eigenvector of $BA$ corresponding to some eigenvalue $\lambda \neq 0$, then 
	\begin{equation}
	(AB)Av = A(BA) v = \lambda A v.
	\end{equation}
	Since $\lambda \neq 0$, $Av$ is an eigenvector of $AB$ corresponding to eigenvalue $
	\lambda$. Thus non-zeros eigenvalues of $BA$ are also eigenvalues of $AB$. Then switching the roles of $A$ and $B$ above, we can see that non-zero eigenvalues of $AB$ are also eigenvalues of $BA$.
\end{proof}
By Proposition \ref{prop:2}, the eigenvalues of $\bpHmisfit$ are the same as the eigenvalues of $W \mathbf{H}_d W^T$, where 
\begin{equation}\label{eq:Hd-low-rank}
\mathbf{H}_d = \hat{\bJ}_d \bCpr \hat{\bJ}^T_d \approx \hat{\mathbf{H}}_d = U_k \Sigma_k U_k^T,
\end{equation}
where the low rank approximation $\hat{\mathbf{H}}_d $ can be efficiently computed by the randomized Algorithm \ref{alg:RSVD}. Therefore, the eigenvalues of $\bpHmisfit$ can be efficiently computed by first forming a small matrix $W \hat{\mathbf{H}}_d  W^T$ of size $r \times r$, and then computing the eigenvalues of $W \hat{\mathbf{H}}_d  W^T$.
 Note that both the MAP point $\bmap^i$ and the Hessian $\bHmisfit$ depend on the design matrix $W$, with the former depending on $W$ through the optimization problem \eqref{eq:Laplace-mean}, and the latter through its definition in \eqref{eq:discrete-PostCov}, where the Jacobian $\bJ$ depends on the design matrix. As a result, the eigenvalues of $\bpHmisfit$ depends on $W$.
With these eigenvalues, we obtain the approximations for the quantities in \eqref{eq:KL-NonlinearDiscrete} as \cite{VillaPetraGhattas18}
\begin{equation}
\logdet{\mathbf{I} + \bpHmisfit(\bmap^i) } \approx \sum_{j=1}^{r}
\ln(1+\lambda_j^i(W)) \quad \text{ and } \quad
\trace{\bpostcov^\frac{1}{2}\bHmisfit(\bmap^i)\bpostcov^{\frac12}}
\approx \sum_{j=1}^{r} \frac{\lambda_j^i(W)}{1+\lambda_j^i(W)},
\end{equation}
which leads to a Laplace approximation (with Gauss--Newton Hessian and low rank approximation) of the EIG \eqref{eq:Phiapprox} as 
\begin{equation}\label{eq:LA-EIG}
\hat{ \Psi }(W) =  \frac{1}{N_s}\sum_{i=1}^{N_s} \frac{1}{2}
\left[ \sum_{j=1}^{r}\left( \ln(1+\eigen_j^i(W)) -
  \frac{\eigen_j^i(W)}{1+\eigen_j^i(W)}\right) + \|\bmumap^i(W) - \bmupr \|^2_{\bCpr^{-1}} \right].
 \end{equation} 

\subsection{Fixed MAP point approximation}
\label{sec:fix-map}
To evaluate the Laplace approximation of the EIG in \eqref{eq:LA-EIG} for each design matrix $W$, one has to solve the optimization problem \eqref{eq:Laplace-mean} for the MAP point $\map^i(W)$ at each data $\bs{y}_i$, $i = 1, \dots, N_s$, which is typically very expensive. 
Rather than solving the optimization problem for each design $W$ and each data $\bs{y}_i$, we consider the MAP point $\bmap^i(\wall)$ at data $\bs{y}_i$ observed from all candidate sensors $\wall$, which is fixed with respect to the design $W$. Consequently, the eigenvalues $\eigen_j^i$ of the Hessian only depend on $W$ through the Jacobian $\bJ$, not on the MAP point. With such a fixed MAP approximation, we can define the approximate EIG as 
\begin{equation}
\tilde{ \Psi } : = \frac{1}{N_s}\sum_{i=1}^{N_s}  \frac{1}{2}
\left[\sum_{j=1}^{k_i}\left( \ln(1+\eigen_j^i(W)) -
\frac{\eigen_j^i(W)}{1+\eigen_j^i(W)}\right) + \|\bmumap^i(\wall) - \bmupr \|^2_{\bCpr^{-1}} \right].
\end{equation}
Since $ \|\bmumap^i(\wall) - \bmupr \|^2_{\bCpr^{-1}}$ is independent of $W$,  maximizing $\tilde{ \Psi } $ is equivalent to maximizing 
\begin{equation}
 \tilde{\psi}  :=  \frac{1}{N_s}\sum_{i=1}^{N_s}  \frac{1}{2}
\left[\sum_{j=1}^{k_i}\left( \ln(1+\eigen_j^i(W)) -
 \frac{\eigen_j^i(W)}{1+\eigen_j^i(W)}\right) \right].
  \end{equation}


\subsection{Prior sample point approximation}
\label{sec:prior-sample}
With the fixed MAP point approximation, one still needs to solve the optimization problem \eqref{eq:Laplace-mean} to generate the MAP points $\bmap^i(\wall)$ from the observation data $\bs{y}_i$, $i = 1, \dots, N_s$. Note that the observation data $\bs{y}_i$ in \eqref{eq:obsi} is provided by first solving the forward model at a sample $\bs{m}_i$ randomly drawn from the prior, and then extracting the observation from all the candidate sensors. As the number of candidate sensors becomes big, we expect that the fixed MAP point $\bmap^i(\wall)$ can recover or come close to the prior sample $\bs{m}_i$ if the inverse problem is not severally ill-posed, which provides a rational to replace the fixed MAP point  $\bmap^i(\wall)$ by the prior sample $\bs{m}_i$. By this prior sample point approximation, one can completely get rid of solving the optimization problem \eqref{eq:Laplace-mean}.


\subsection{Greedy algorithms}
To this end, we present the standard and swapping greedy algorithms for nonlinear Bayesian inverse problems with all the above approximations taken into account. 
  
\begin{algorithm}[H]
\small
   \caption{A standard greedy algorithm }
   \label{alg:greedy12}
\begin{algorithmic}[1]
  \STATE {\bfseries Input}: data $\{ \obs_i\}^{N_s}_{i=1}$ generated from the prior samples $ \{ \bs{m}_i\}^{N_s}_{i=1}$, $d$ candidates set $S$, $r$ budget, initial set $S^0 = \emptyset$.
   \FOR{$i = 1,\dots,N_s$}
   \STATE Form $\mathbf{H}_d = \hat{\bJ}_d(\bs{m}_i) \bCpr \hat{\bJ}^T_d(\bs{m}_i) $.
   \STATE Compute $\mathbf{H}_d \approx U_k^i \Sigma_k^i (U_k^{i})^T$ by Algorithm \ref{alg:RSVD}.
   \ENDFOR
   \FOR{t = 1,\dots,r}
   \STATE $v^* \leftarrow \arg \max_{v \in S \setminus S^{t-1}}\tilde{\psi} (y,W, \{ U^i_k, \Sigma_k^i\}_{i=1}^{N_s})$,\\
   $W_v$ is the design matrix for the sensor choice $S^{t-1} \cup \{ v\}$. 
   \STATE $S^t \leftarrow S^{t-1} \cup \{ v^*\}$.
  \ENDFOR
  \STATE {\bfseries Output:} optimal sensor choice $S^r$.
    \end{algorithmic}
\end{algorithm}
  \begin{algorithm}[H]
\small
   \caption{A swapping greedy algorithm }
   \label{alg:greedy22}
\begin{algorithmic}[1]
  \STATE {\bfseries Input}: data $\{\obs_i\}^{N_s}_{i=1}$ generated from the prior samples $ \{ \bs{m}_i\}^{N_s}_{i=1}$, $d$ candidates set $S$, $r$ budget.
   \FOR{$i = 1,\dots,N_s$}
   \STATE Form $\mathbf{H}_d = \hat{\bJ}_d(\bs{m}_i) \bCpr \hat{\bJ}^T_d(\bs{m}_i) $.
	\STATE Compute $\mathbf{H}_d \approx U_k^i \Sigma_k^i (U_k^{i})^T$ by Algorithm \ref{alg:RSVD}.
   \ENDFOR
  \STATE Compute the initial guess sensor choice $S^*=\{s_1,\dots,s_r\} \subset S$ based on leverage scores of $\sum^{N_s}_{i=1} U_k^i $. $S^0 = \{\emptyset \}$.
  \WHILE{$S^* \neq S^0$}
  \STATE $S^0 \leftarrow S^*$
   \FOR{t = 1,\dots,r}
   \STATE $v^* \leftarrow \arg \max_{v \in \{ s_t\} \cup (S \setminus S^{t-1})}\tilde{\psi} (y,W, \{ U^i_k, \Sigma_k^i\}_{i=1}^{N_s})$,\\
   $W_v$ is the design matrix for the sensor choice $S^{t-1}  \setminus  \{ s_t\}\cup \{ v\}$.
   \STATE $S^t \leftarrow (S^{t-1}  \setminus  \{ s_t\}) \cup \{ v^*\}$.\\
  \ENDFOR\\
  $S^* \leftarrow S^r$.
  \ENDWHILE
  \STATE {\bfseries Output:} optimal sensor choice $S^*$.
    \end{algorithmic}
\end{algorithm}

\subsection{Computation and complexity}
\label{sec:complexity}
In this section, we present the computation involved in finding the MAP point in \eqref{eq:Laplace-mean} and in the low rank approximation \eqref{eq:Hd-low-rank}, which requires PDE solves and takes the dominant computational cost. We also present a comparison of computational complexity for different approximations introduced above. 

\subsubsection{Finding the MAP point}
In the computation of the MAP point, one needs to solve an optimization problem. We use an inexact Newton-CG method, which requires the computation of the action of the Hessian  of the objective \eqref{eq:Laplace-mean} in a given direction $\hat{{m}}$, evaluated at a point ${m}$, which can be formally written as 
\begin{equation}\label{eq:Hhatm}
\H(m) \hat{m} = (\nabla_m^2 \Phi(m, \obs) + \Cpr^{-1}) \hat{m}.
\end{equation}
The second term can be evaluated by \eqref{eq:bCpr} as 
\begin{equation}
\Cpr^{-1} \hat{m} = \bCpr^{-1} \hat{\bs{m}} = \mathbf{A} \mathbf{M}^{-1} \mathbf{A} \hat{\bs{m}},
\end{equation}
which requires solve of a linear system with mass matrix $\mathbf{M}$. The first term of \eqref{eq:Hhatm} can be evaluated by a Lagrangian multiplier method. The potential $ \Phi(m, \obs)$ can be explicitly written as $\Phi(u) = \frac{1}{2}||\mathcal{B}(u) - \obs||^2_{\Gnoise^{-1}}$ by slight abuse of notation. We introduce the Lagrangian 
\begin{equation}
\cL(u, m, v) = \Phi(u) + r(u, m, v),
\end{equation}
where $v$ is the Lagrangian multiplier, $r(u, m, v)$ is the weak form of the forward PDE \eqref{eq:forward-weak}. By taking variation of $\cL$ w.r.t.\ $v$ as zero, we obtain the forward problem \eqref{eq:forward-weak}, which can be equivalently written as: find $u \in \mathcal{V}$ such that  
\begin{equation}\label{eq:L-u}
\langle \tilde{v}, \partial_v r (u, m, v) \rangle = 0 \quad \forall \tilde{v} \in \mathcal{V}.
\end{equation}
By taking variation of $\cL$ w.r.t.\ $u$ as zero, we obtain the adjoint problem: find $v \in \mathcal{V}$ such that 
\begin{equation}\label{eq:L-v}
\langle \tilde{u}, \partial_u r (u, m, v) \rangle = -\langle \tilde{u}, \partial_u \Phi(u) \rangle \quad \forall \tilde{u} \in \mathcal{V}.
\end{equation}
where $\langle \tilde{u}, \partial_u \Phi(u) \rangle = (\mathcal{B}(u) - \obs)^T \Gnoise^{-1} \mathcal{B}(\tilde{u})$. The the gradient of $\Phi$ w.r.t.\ $m$ can be evaluated as 
\begin{equation}\label{eq:L-m}
\langle \tilde{m}, \nabla_m \Phi(m, \obs) \rangle = \langle \tilde{m}, \partial_m \cL(u, m, v) \rangle = \langle \tilde{m}, \partial_m r(u, m, v)\rangle.
\end{equation}
To compute the Hessian action $\nabla_m^2 \Phi \; \hat{m}$, we introduce another Lagrangian by taking the constrains \eqref{eq:L-u}, \eqref{eq:L-v}, and \eqref{eq:L-m} into account as 
\begin{equation}
\cL^H(u, m, v, \hat{u}, \hat{m}, \hat{v}) = \langle \hat{m}, \partial_m r(u, m, v) \rangle + \langle \hat{v}, \partial_v r(u, m, v) \rangle + \langle \hat{u}, \partial_u r(u, m, v) +\partial_u \Phi(u) \rangle,
\end{equation}
where $\hat{u}$ and $\hat{v}$ are the Lagrangian multipliers.  By taking variation of $\cL^H$ w.r.t.\ $v$ as zero, we obtain the incremental forward problem: find $\hat{u} \in \mathcal{V}$ such that 
\begin{equation}\label{eq:Lhatu}
\langle \tilde{v}, \partial_{uv} r \; \hat{u}\rangle = -\langle \tilde{v}, \partial_{mv} r \; \hat{m} \rangle \quad \forall \tilde{v} \in \mathcal{V},
\end{equation}
where $\partial_{uv} r: \mathcal{V} \to \mathcal{V}'$ and $\partial_{mv} r: \mathcal{M} \to \mathcal{V}'$ are the linear operators.  By taking variation of $\cL^H$ w.r.t.\ $u$ as zero, we obtain the incremental adjoint problem: find $\hat{v} \in \mathcal{V}$ such that 
\begin{equation}\label{eq:Lhatv}
\langle \tilde{u}, \partial_{vu} r \; \hat{v}\rangle = -\langle \tilde{u}, \partial_{mu} r \; \hat{m} \rangle   -\langle \tilde{u}, \partial_{uu} r \; \hat{u} \rangle -\langle \tilde{u}, \partial_{uu} \Phi \; \hat{u} \rangle \quad \forall \tilde{u} \in \mathcal{V},
\end{equation}
where $\partial_{vu} r: \mathcal{V} \to \mathcal{V}'$, $\partial_{mu} r: \mathcal{M} \to \mathcal{V}'$, and $\partial_{uu} r: \mathcal{V} \to \mathcal{V}'$ are linear operators, and $\langle \tilde{u}, \partial_{uu} \Phi \; \hat{u} \rangle  =  \mathcal{B}(\hat{u})^T \Gnoise^{-1} \mathcal{B}(\tilde{u})$. To this end, the Hessian action $\nabla_m^2 \Phi \; \hat{m} $ can be evaluated as 
\begin{equation}\label{eq:Lhatm}
\langle \tilde{m}, \nabla_m^2 \Phi \; \hat{m} \rangle = \langle \tilde{m}, \partial_m \cL^H \; \hat{m} \rangle = \langle \tilde{m}, \partial_{mm} r \; \hat{m} + \partial_{vm} r \; \hat{v} + \partial_{um} r\; \hat{u} \rangle, 
\end{equation}
where $\partial_{mm} r: \mathcal{M} \to \mathcal{M}'$, $\partial_{vm} r: \mathcal{V} \to \mathcal{M}'$, and $\partial_{um} r: \mathcal{V} \to \mathcal{M}'$ are linear operators. Therefore, at $m$, after solving  one forward problem \eqref{eq:L-v} and one adjoint problem \eqref{eq:L-u}, the Hessian action $\nabla_m^2 \Phi \; \hat{m}$ at each $\hat{m}$ requires solution of one incremental forward problem \eqref{eq:Lhatu} and one incremental adjoint problem \eqref{eq:Lhatv}, i.e., two linearized PDE solves. We solve all the above PDEs by a finite element method in the subspace $\mathcal{V}_{n_u} \subset \mathcal{V}$, and $\mathcal{M}_n \subset \mathcal{M}$.

\subsubsection{Low rank approximation}
In the computation of the low rank approximation \eqref{eq:Hd-low-rank} by Algorithm \ref{alg:RSVD}, we need to perform the actions $\mathbf{H}_d \bs{\Omega}$ and $\mathbf{H}_d \bs{Q}$, where $\mathbf{H}_d$ is defined in \eqref{eq:Hd-low-rank}. We present the computation of the action $\mathbf{H}_d \hat{\bs{z}}$ in a direction $\hat{ \bs{z}} \in \bR^d$.
By definition, we have 
\begin{equation}\label{eq:Hd}
\mathbf{H}_d \hat{\bs{z}} = \hat{\bJ}_d \bCpr \hat{\bJ}^T_d \hat{\bs{z}} =  (\Gnoise^d)^{-\frac{1}{2}}  \bJ_d \bCpr \bJ_d ^T (\Gnoise^d)^{-\frac{1}{2}} \hat{\bs{z}}  =  (\Gnoise^d)^{-\frac{1}{2}}  \bJ_d \mathbf{A}^{-1} \mathbf{M} \mathbf{A}^{-1}\bJ_d^T  (\Gnoise^d)^{-\frac{1}{2}} \hat{\bs{z}}, 
\end{equation}
which involves the computation of the actions of the Jacobian ${\bJ}_d$ and its transpose ${\bJ}_d^T$, and two solves of a linear system with stiffness matrix $\mathbf{A}$. We first consider the action of ${\bJ}_d^T \bs{z}$ in direction $\bs{z} = (\Gnoise^d)^{-\frac{1}{2}} \hat{ \bs{z}}$. By definition of the Jacobian in \eqref{eq:Jacobian} and $\mathcal{F}(m) = \mathcal{B}_d(u(m))$, we have  
${\bJ}_d^T \bs{z} = (\nabla_{\bs{m}} \bs{u}(\bs{m}))^T \mathbf{B}_d^T \bs{z}$,
where the $j$-th column $(\mathbf{B}_d)_j = \mathcal{B}_d(\psi_j) \in \bR^d$ for the basis functions $\{\psi_j\}_{j=1}^{n_u}$ in approximating the state $u = \sum_{j = 1}^{n_u} u_j \psi_j$, and $\bs{u} = (u_1, \dots, u_{n_u})^T \in \bR^{n_u}$ is the coefficient vector. 
By taking the gradient of the forward problem in the form \eqref{eq:L-u} w.r.t.\ $m$, and noting that $u$ depends on $m$, we have
\begin{equation}
0 = \langle \tilde{v}, \partial_{vu} r \; \nabla_m u(m) + \partial_{vm} r \rangle \quad \forall v \in \mathcal{V}.
\end{equation}
Let $\mathbf{R}_{vu} $ and $\mathbf{R}_{vm}$ denote the matrices corresponding to the finite element discretization of $\partial_{vu} r$ and $\partial_{vm} r$ above, respectively, we formally obtain
\begin{equation}
 \nabla_{\bs{m}} \bs{u}(\bs{m}) = - \mathbf{R}_{vu}^{-1} \mathbf{R}_{vm}, 
\end{equation}
so that ${\bJ}_d^T \bs{z}$ can be evaluated by solving the linear system $\mathbf{R}_{vu}^{T} \bs{w} = \mathbf{B}_d^T \bs{z}$ for $\bs{w} \in \bR^{n_u}$, which has the same matrix $\mathbf{R}_{vu}^{T} = \mathbf{R}_{uv}$ as in the discrete incremental forward problem \eqref{eq:Lhatu}, and performing the matrix-vector product $-\mathbf{R}_{vm}^T \bs{w}$. Similarly the Jacobian action $\bJ_d \bs{n} = \mathbf{B}_d \nabla_{\bs{m}} \bs{u}(\bs{m}) \bs{n}$ for $\bs{n} = \mathbf{A}^{-1} \mathbf{M} \mathbf{A}^{-1}\bJ_d^T {\bs{z}}$ can be evaluated by first performing the matrix-vector product $\mathbf{R}_{vm} \bs{n}$, then solving the linear system $\mathbf{R}_{vu} \bs{w} = \mathbf{R}_{vm} \bs{n}$ for $\bs{w}  \in \bR^{n_u}$, which has the same matrix $\mathbf{R}_{vu} $ as in the discrete incremental adjoint problem \eqref{eq:Lhatv}, and finally performing the matrix-vector product $-\mathbf{B}_d \bs{w}$. In summary, after solving one forward problem \eqref{eq:L-v} for $u$ and one adjoint problem \eqref{eq:L-u} for $v$, each action $\mathbf{H}_d \hat{\bs{z}}$ consists of solving four linearized PDEs, one with matrix $\mathbf{R}_{vu}$, one with $\mathbf{R}_{uv}$, and two with $\mathbf{A}$.


\subsubsection{Computational complexity}

Suppose we need to evaluate the approximate KL divergence \eqref{eq:Phiapprox} at each of the $N_s$ training data $\{\obs_i\}_{i = 1}^{N_s}$ for $N_{kl}$ times, each time corresponding to a different choice of the sensor location to find the optimal design by the greedy algorithms. Assume that to find the MAP point for each training data, we need $N_{nt}$ Newton iterations and $N_{cg}$ CG iterations for each Newton iteration in average.  Then in total we need $N_{kl} \times N_s \times  N_{nt} \times N_{cg}$ times Hessian action \eqref{eq:Hhatm} to compute the exact MAP points, $N_s \times N_{nt} \times N_{cg}$ times to compute the fixed MAP points, $0$ times if the prior points are used. 
Assume that in average the rank $k$ with oversampling factor $p = 10$ is used in the low rank approximation by Algorithm \ref{alg:RSVD}. Then in total we need $N_{kl} \times N_s \times 2(k+p)$ times Hessian action \eqref{eq:Hd} for the case of exact MAP point, $N_s \times 2(k+p)$ times for the case of fixed MAP point, and $N_s \times 2(k+p)$ times for case of prior point. We summarize the computational complexity in terms of Hessian action in the following table.


\begin{table}[!htbp]
\caption{Computational complexity in terms of the number of Hessian actions \eqref{eq:Hhatm} for finding MAP point and \eqref{eq:Hd} for low rank approximation. $N_s$: \# data, $N_{kl}$: \# EIG evaluations, $N_{nt}$: \# Newton iterations, $N_{cg}$: \# CG iterations, $k$: \# rank of the low rank approximation \eqref{eq:Hd-low-rank},  $p$: an oversampling factor in Algorithm \ref{alg:RSVD}.}
\begin{center}
\begin{tabular}{|c|c|c|c|}
\hline
\# Hessian action  &exact MAP point &fixed MAP point & prior sample point\\
\hline
\tabincell{c}{Finding MAP point\\(by Newton-CG) } & $N_{kl} \times N_s \times  N_{nt} \times N_{cg}$ & $N_s \times  N_{nt} \times N_{cg}$ & $0$ \\
\hline
\tabincell{c}{Low-rank approximation \\ (by RSVD Algorithm \ref{alg:RSVD})}& $N_{kl} \times N_s \times 2(k+p)$ & $N_s \times 2(k+p)$ & $N_s \times 2(k+p)$\\
\hline
\end{tabular}
\end{center}
\end{table}

\section{Numerical results}
\label{sec:numerical}
In this section, we present OED for both a linear and a nonlinear Bayesian inverse problems to demonstrate the efficacy and efficiency of the proposed approximations and the greedy algorithm, as well as the scalability of our method w.r.t.\ the number of training data points, the number of candidate sensors, and the parameter dimension.

 \subsection{A linear Bayesian inverse problem}
 In this example, we consider inversion of the initial condition of an advection-diffusion problem given pointwise observation of the state at certain sensor locations and certain times. The forward problem is given by 
\begin{equation}\label{eq:advection-diffusion}
\begin{split}
u_t - k \Delta u + \bs{v} \cdot \nabla u &= 0 \text{ in } \mathcal{D} \times (0,T),\\
u(\cdot, 0) & = m \text{ in } \mathcal{D} ,\\
k \nabla u \cdot n & = 0 \text{ on } \partial\mathcal{D} \times (0,T),
\end{split}
\end{equation}
where $\mathcal{D} \subset \bR^2$ is an open and bounded domain with boundary $\partial \mathcal{D}$ depicted in Figure \ref{fig:velocity}.  $k >0$ is the diffusion coefficient and $T > 0$ is the final time. In our numerical experiment, we choose $k = 0.001$. The velocity field $\bs{v} \in \bR^2$ is obtained by solving the following steady-state Navier--Stokes equation with the side walls driving the flow:
 \begin{equation}\label{eq:Navier-Stokes}
 \begin{split}
-\frac{1}{\text{Re}} \Delta \bs{v} + \nabla q + \bs{v} \cdot \nabla \bs{v} & = 0 \text{ in } \mathcal{D} ,\\
\nabla \cdot \bs{v} & = 0 \text{ in } \mathcal{D} ,\\
\bs{v} & = \bs{g} \text{ on } \partial\mathcal{D},
 \end{split}
\end{equation}
where $q$ represents the pressure field, the Reynolds number $\text{Re} = 50$. The Dirichlet boundary data $\bs{g} \in \bR^2$ is prescribed as $\bs{g} = (0, 1)$ on the left wall of the domain, $\bs{g} = (0, -1)$ on the right wall, and $\bs{g} = (0, 0)$ elsewhere.
\begin{figure}[!hbtp]
\centering
\includegraphics[width=.4\textwidth]{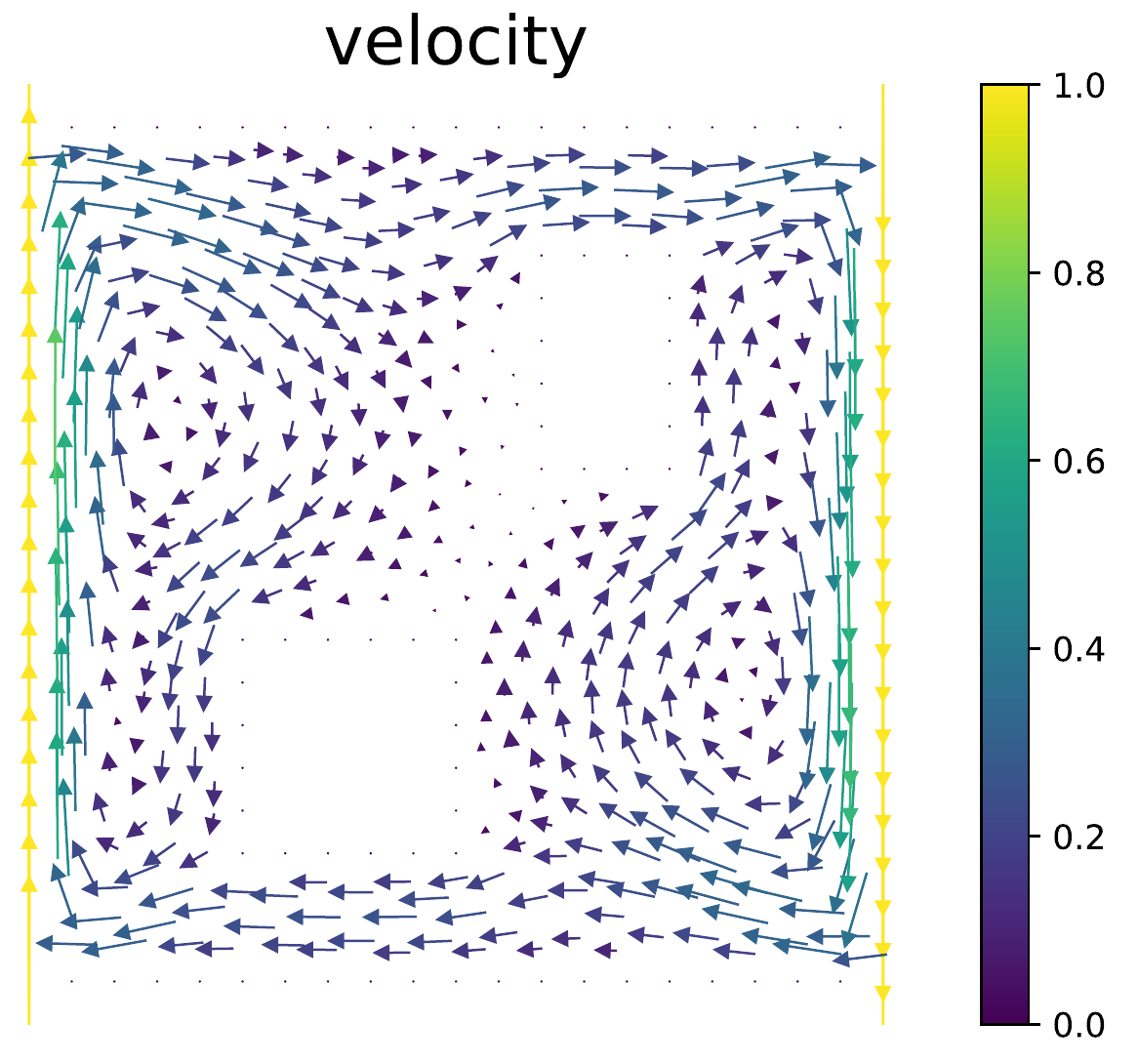} 
\includegraphics[width=.4\textwidth]{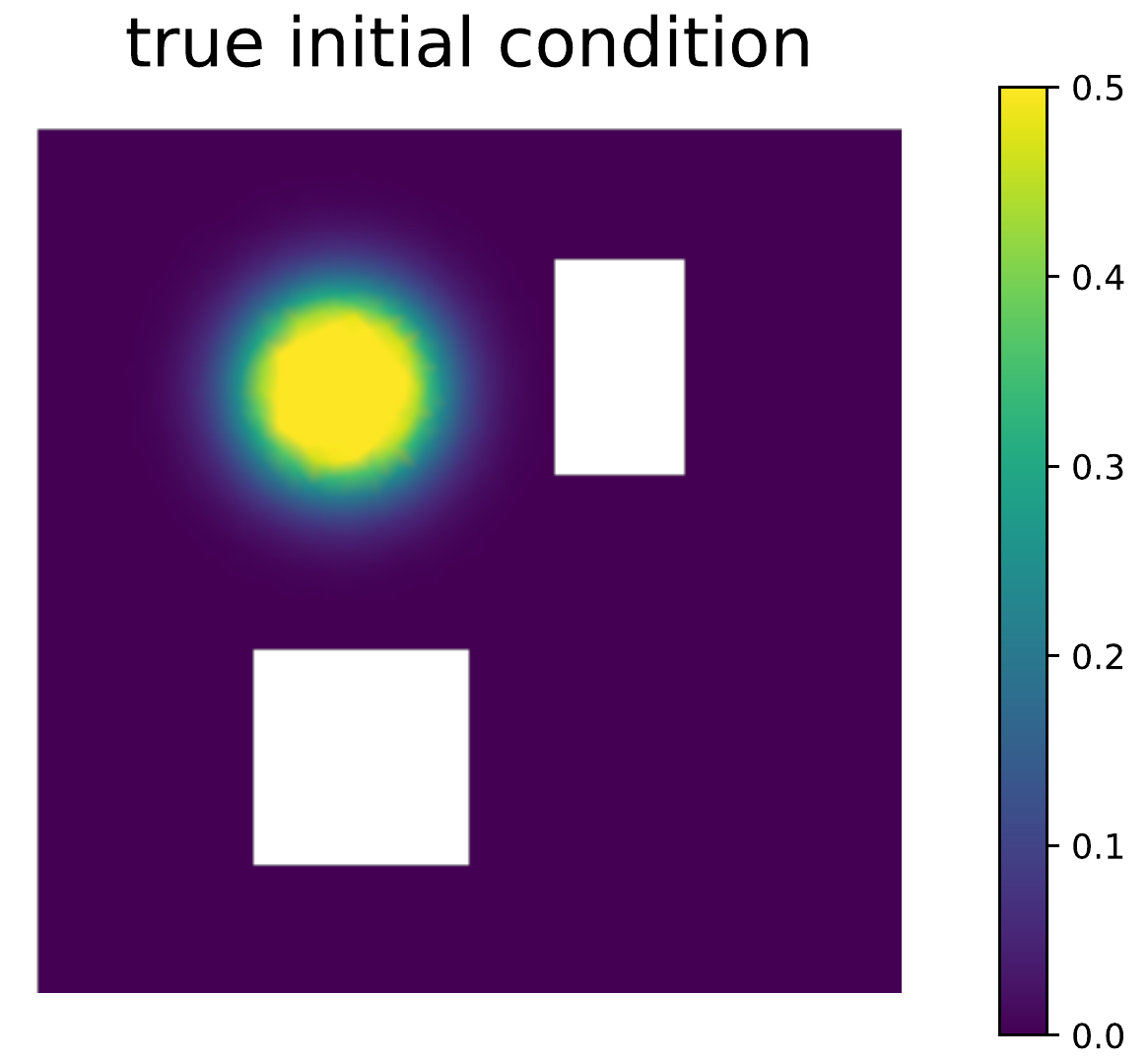} 
\caption{The computational domain $\mathcal{D}$ for two dimensional problem is $[0,1]^2$ with two rectangular blocks ($[0.25,0.5]\times[0.15,0.4], [0.6,0.75]\times[0.6,0.85]$) removed.  Left: velocity field $\bs{v}$. Right:  initial condition $m_{\text{true}}$.}
\label{fig:velocity}
\end{figure}  \\
We use Galerkin finite element method with piecewise linear elements for spatial discretization of the forward and adjoint problems, which results in $n = 2023$ spatial degrees of freedom for the parameter $m$ and state variable $u$. We use implicit Euler method for temporal discretization with $N_t = 40$ time steps for the final time $T = 4$.
We consider a Gaussian prior $ \mupr = \mathcal{N}(\prmean, \Cpr)$ for the parameter $m$. The covariance $\Cpr = \mathcal{A}^{-2}$ is given by the square of the inverse of differential operator $\mathcal{A} = -\gamma \Delta + \delta I$ with Laplacian $\Delta$ and identity $I$, equipped with Robin boundary condition $\gamma \nabla m \cdot \mathbf{n} + \beta m$ on $\partial \mathcal{D}$, where $\gamma, \delta > 0$ control the correlation length and variance of the prior distribution. The Robin coefficient $\beta$ is chosen as in \ref{DaonStadler18} to reduce boundary artifacts. For our numerical test, we choose $\prmean = 0.25$, $\gamma = 1, \delta = 8$, and set a ``true" initial condition $m_{\text{true}} = \min(0.5, \exp(-100\norm{x-[0.35,0.7]}^2)$, as shown in Figure \ref{fig:velocity}, to generate the observation data at the final time, as shown in Figure \ref{fig:obs}.


In the first test, we use a small number of candidate sensors and compare the design obtained by the greedy algorithms with the optimal design by brutal force search to show the efficacy of the greedy algorithms. 
\begin{figure}[!hbtp]
\centerline{\includegraphics[width=1.\textwidth]{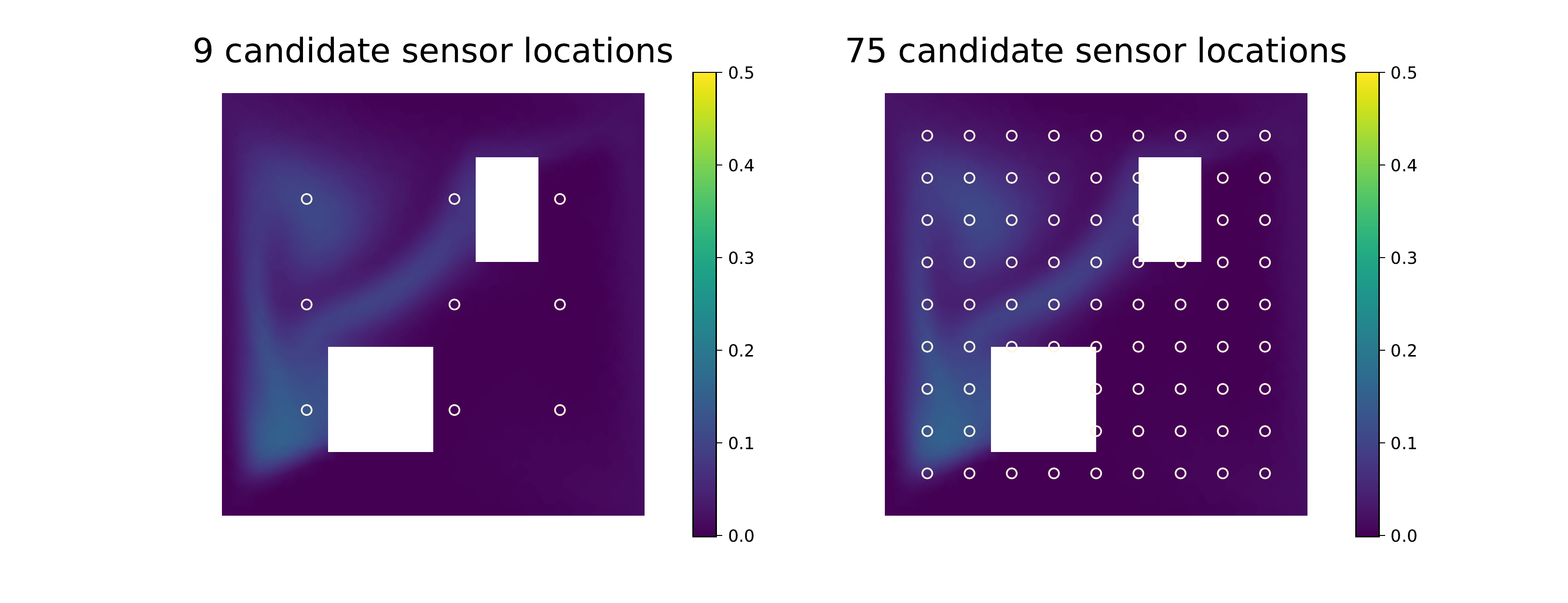}} 
\caption{The state field at time $T = 4$ obtained by solving \eqref{eq:advection-diffusion} at the initial condition in Figure \ref{fig:velocity}. Left: 9 candidate observation locations. Right: 75 candidate observation locations.} 
\label{fig:obs}
\end{figure}   
Specifically, we use a grid of $d = 9$ candidate sensor locations $\{x_i\}_{i=0}^9$ ($x_i \in \{0.2,0.55,0.8 \}\times \{0.25,0.5,0.75 \}$) as shown in Figure \ref{fig:obs} (left) with the goal of choosing $r = 2, 3, 4, 5, 6, 7, 8$ sensors at the finial time. 
We run the two greedy algorithms, Algorithm \ref{alg:greedy1} and Algorithm \ref{alg:greedy2}, as well as a brutal force search of all possible designs ($\frac{9!}{r!(9-r)!}$) to find the optimal design. In the evaluation of the approximate EIG \eqref{eq:Psihat}, we do not need the low-rank approximation of $\mathbf{H}_d$ here as in Theorem \ref{thm:low-rank-EIG} since it is a small ($9 \times 9$) matrix that we can easily compute by solving 9 incremental forward and 9 incremental adjoint problems in directions $e_j$ of dimension 9 with the $j$-th element as one and other as zeros, $j = 1, \dots, 9$.  The values of the approximate EIG by the two algorithms and at all possible designs are shown in Figure \ref{fig:linear-EIG} (left), which are also shown in Table \ref{table:linear} for a closer comparison.

\begin{figure}[h!]
  \centering
  \begin{subfigure}[b]{0.45\linewidth}
    \includegraphics[width=\linewidth]{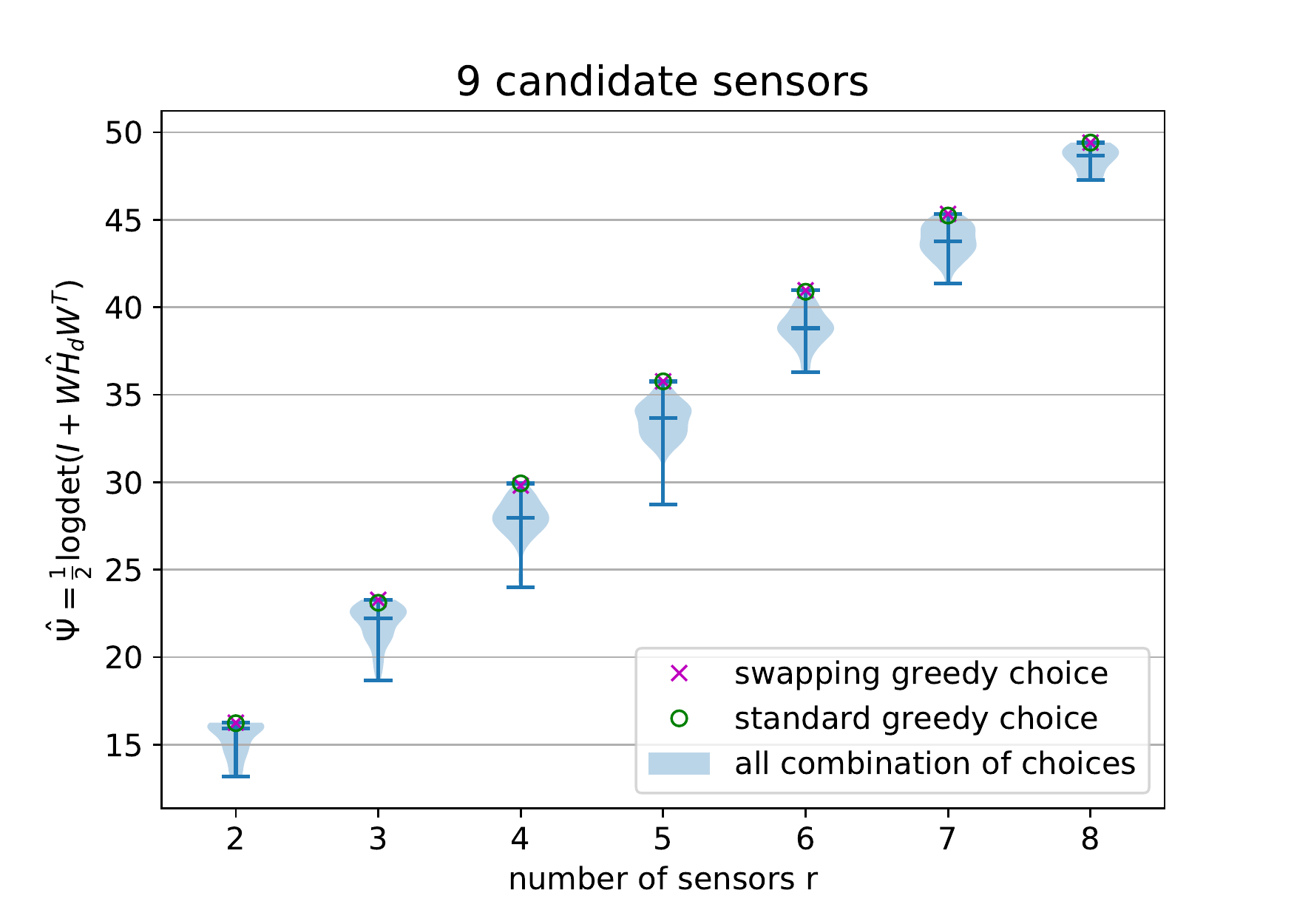}   
  \end{subfigure}
  \begin{subfigure}[b]{0.45\linewidth}
    \includegraphics[width=\linewidth]{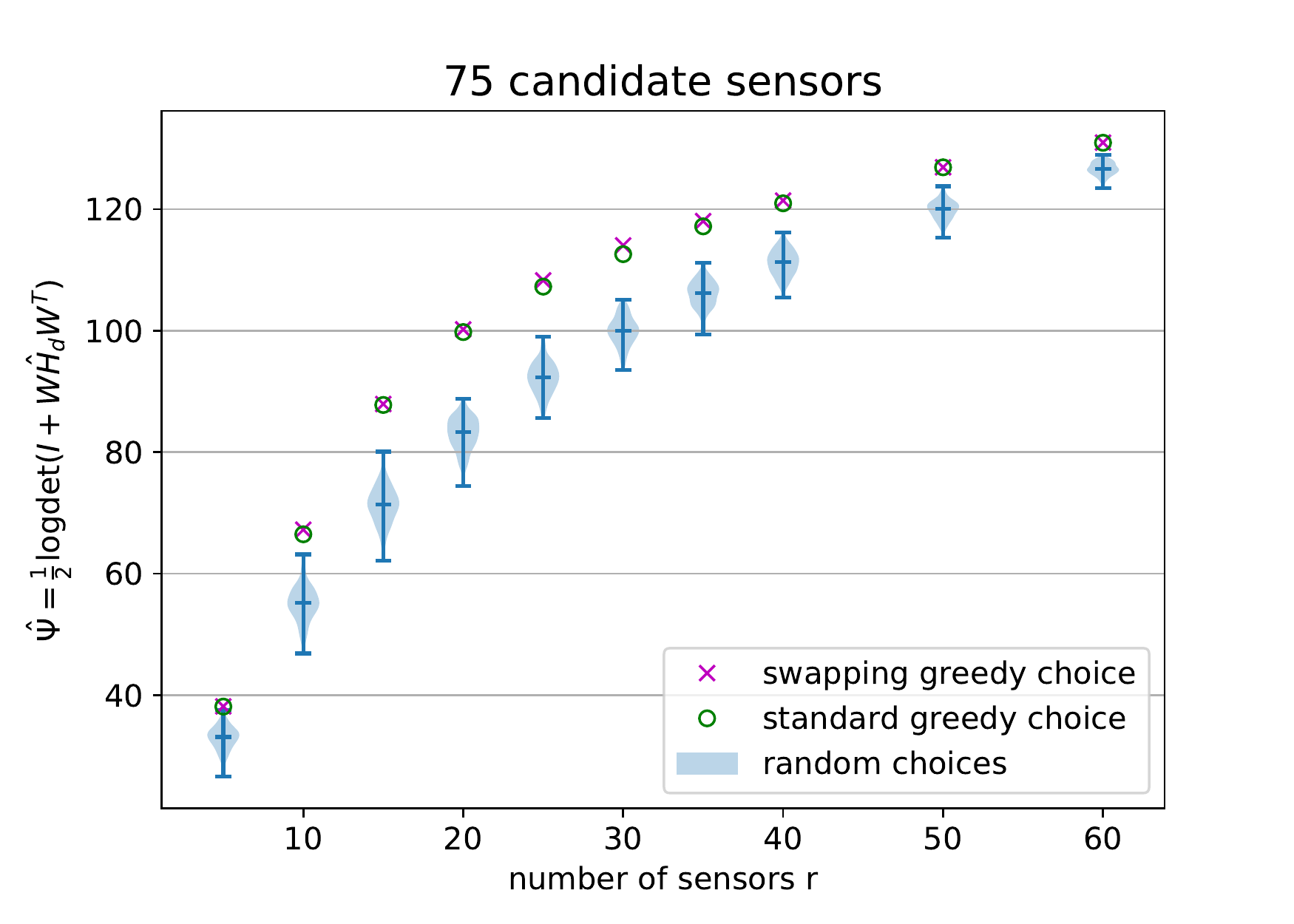}   

  \end{subfigure}
  \caption{Approximate EIG $\hat{\Psi}$ at the design chosen by the standard greedy algorithm, the swapping greedy algorithm and at the optimal design, with the ranking of greedy choices among all the possible designs in the bracket. 
}
\label{fig:linear-EIG}
\end{figure}

\begin{table}[!htbp]
\caption{Approximate EIG $\hat{\Psi}$ at the design chosen by the standard greedy algorithm, the swapping greedy algorithm and at the optimal design, with the ranking of greedy choices among all the possible designs in the bracket. 
}
\begin{center}
\begin{tabular}{|c|c|c|c|c|c|c|c|}
\hline
 $r$ &$2$ &$3$&$4$  & $5$&$6$&$7$ & $8$\\
\hline
\tabincell{c}{standard\\greedy} &$16.2236(\#3)$ &$23.1065(\#5)$&$29.9306(\#1)$  & $35.7619(\#1)$&$40.8833(\#2)$&$45.2398(\#2)$ & $49.4098(\#1)$ \\
\hline
\tabincell{c}{swapping\\greedy} &$16.2528(\#1)$ &$23.2767(\#1)$&$29.8067(\#2)$  & $35.7619(\#1)$&$40.9675(\#1)$&$45.3347(\#1)$ & $49.4098(\#1)$\\
\hline
\tabincell{c}{optimal\\choice} &$16.2528$ &$23.2767$&$29.9306$  & $35.7619$&$40.9675$&$45.3347$ & $49.4098$\\
\hline
\end{tabular}
\end{center}
\label{table:linear}
\end{table}

We can see for $r = 2,3,5,6,7,8$, the swapping greedy algorithm finds the optimal design for all $r$ but $r = 4$ with the second best. While the standard greedy algorithm finds the optimal for $r = 3,4,8$, the second best for $r=6,7$, the third for $r=2$, and the fifth for $r=5$. Although the optimal design are not found in all cases, the the values of the approximate EIG of the chosen ones by the two greedy algorithms are really close to the optimal.

In the second test, we consider a grid of $d = 75$ candidate sensor locations as shown in Figure \ref{fig:obs} (right) with the goal of choosing $r = 5,10,15,20,25,30,35,40,45,50,55,60$ sensor locations.
We compute the low rank approximation \eqref{eq:Hdhat} with the eigenvalues displayed in Figure \ref{fig:lineig}, which decays rapidly, by over five orders of magnitude for the 75 eigenvalues.
\begin{figure}[!hbtp]
\centerline{\includegraphics[width=0.7\textwidth]{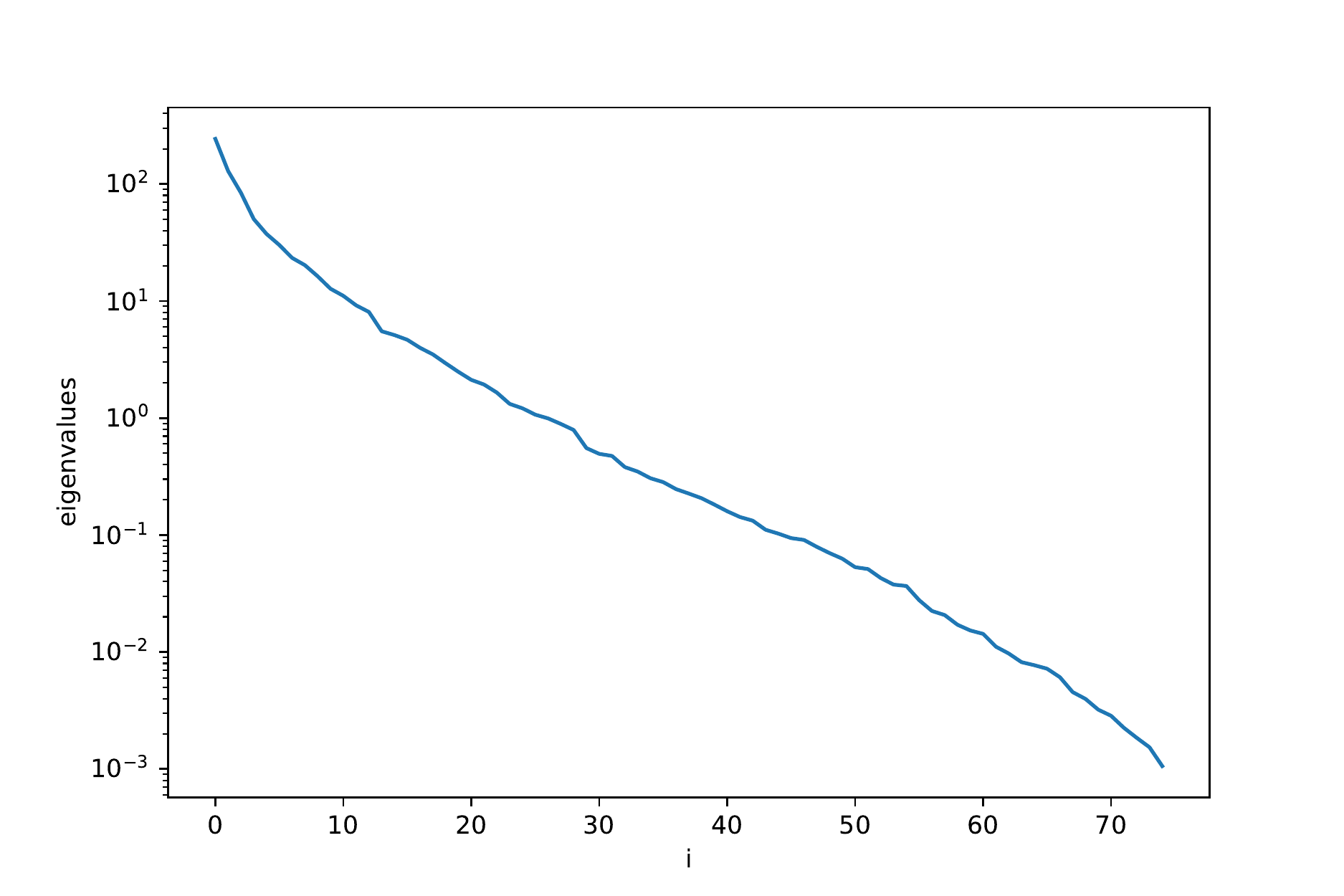}} 
\caption{Decay of the eigenvalues of Hessian $\hat{H}_d$ in \eqref{eq:Hdhat}.} 
\label{fig:lineig}
\end{figure}  
We randomly draw 200 different designs from the candidate sensors and compute their approximate EIG and compare them with the ones chosen by two greedy algorithms as shown in Figure \ref{fig:linear-EIG} (right), from which we can see that both greedy algorithms find the designs better than all the random choices, and the new swapping greedy algorithm we propose always gives higher (better) or at least equal values as shown in Table \ref{table:linear2}.
 \begin{table}[!htbp]
\caption{Approximate EIG $\hat{\Psi}$ at the design chosen by the standard greedy algorithm and the swapping greedy algorithm.}
\begin{center}
\begin{tabular}{|c|c|c|c|c|c|}
\hline
 $r$ &$5$ &$10$&$15$  & $20$&$25$\\
\hline
\tabincell{c}{standard\\greedy} &$38.0898$ &$66.4687$&$87.7688$  & $99.7733$&$107.2547$\\
\hline
\tabincell{c}{swapping\\greedy}&$38.1207$ &$67.2216$&$87.9203$  & $100.2188$&$108.2906$\\
\hline
r&$30$ & $35$& $40$ & $50$ & $60$\\
\hline
\tabincell{c}{standard\\greedy}&$112.6066$ & $117.1837$& $120.9787$ & $126.9114$ & $130.9548$ \\
\hline
\tabincell{c}{swapping\\greedy}&$114.0506$ & $118.0523$& $121.4332$ & $126.9114$ & $130.9821$\\
\hline
\end{tabular}
\end{center}
\label{table:linear2}
\end{table} 
Moreover, the advantage of our swapping greedy algorithm can also be illustrated by reduced pointwise posterior variance in Figure \ref{fig:variance} compared to the standard greedy algorithm, and two random designs with the same number of sensors. 
 \begin{figure}[!hbtp]
\centering
\includegraphics[width=1.\textwidth]{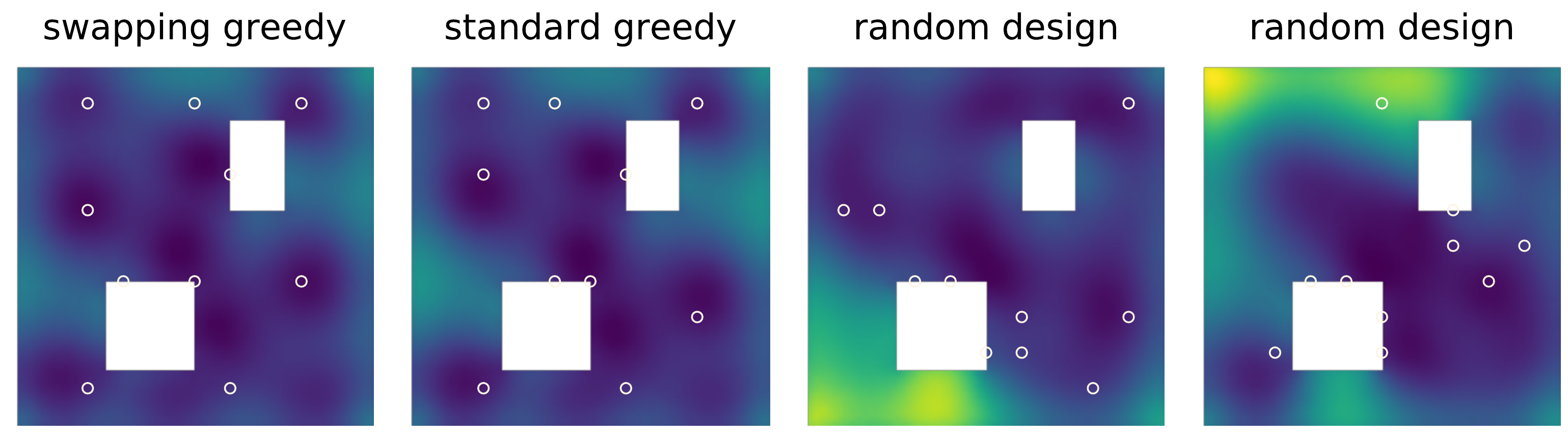}
\caption{Pointwise variance of the posterior at designs chosen by swapping greedy algorithm, the standard greedy algorithm, and two random designs with $10$ sensors. The brighter region corresponds to larger variance. Compared with the optimal design chosen by swapping greedy algorithm, the standard greedy algorithm and the two random designs lead to $7 \%, 30 \%, 53\%$ increase in the averaged variance respectively. } 
\label{fig:variance}
\end{figure}  
 

  \subsection{A nonlinear Bayesian inverse problem}
  \label{sec:nonliner-n}
In this problem we consider a log-normal diffusion forward model as follows
\begin{equation}\label{eq:poison}
\begin{split}
-\nabla \cdot (\exp(m) \nabla u) &= f \text{ in } \mathcal{D}\\
u & = g \text{  on } \Gamma_D \\
\exp(m) \nabla u \cdot \mathbf{n} &= h \text{  on } \Gamma_N
\end{split},
\end{equation}
 where $\mathcal{D} \subset \bR^2$ is an open, bounded domain with sufficiently smooth boundary $\Gamma = \Gamma_D \cup\Gamma_N$ with Dirichlet and Neumann boundaries $ \Gamma_D \cap \Gamma_N = \emptyset$ and data $g \in \mathcal{H}^{1/2}(\Gamma_d)$ and $h \in L^2(\Gamma_N)$ respectively. The state variable $u \in \mathcal{V}_g = \{ v \in \mathcal{H}^1(\mathcal{D}): v|_{\Gamma_D} = g\}$. $f \in L^2(\mathcal{D})$ is a source term. We consider a Gaussian prior for the parameter $m \in \mathcal{H}^1(\mathcal{D})$, i.e.,
$m \sim  \mupr = \mathcal{N}(\prmean, \Cpr)$ with mean $\prmean$ and covariance $\Cpr = \mathcal{A}^{-2}$, where $\mathcal{A}$ is a differential operator given by 
 $$ \mathcal{A} m=
\begin{cases}
-\gamma \nabla \cdot (\Theta \nabla m) + \delta m & \text{ in } \mathcal{D} \\
\Theta \nabla m \cdot \mathbf{n} + \beta m& \text{ on } \partial\mathcal{D}
\end{cases},
$$
where $\beta \approx \sqrt{\gamma \delta}$ is the optimal Robin coefficient derived in \ref{DaonStadler18} to minimize boundary artifacts and $\Theta$ is an symmetric positive definite and anisotropic matrix of the form $$ \Theta = 
\begin{bmatrix}
\theta_1 \sin(\alpha)^2 & (\theta_1 - \theta_2) \sin(\alpha) \cos(\alpha)\\
(\theta_1 - \theta_2) \sin(\alpha) \cos(\alpha) & \theta_2 \cos(\alpha)^2
\end{bmatrix}.$$
In our numerical experiment, we set prior mean as zero, $\gamma = 0.04, \delta = 0.2, \theta_1 = 2, \theta_2 = 0.5, \alpha = \pi / 4$. For the forward problem, we consider the domain $\mathcal{D} = (0,1)\times (0,1)$, no source term (i.e., $f = 0$) and no normal flux on $\Gamma_N = \{0,1 \} \times (0,1)$, i.e., imposing the homogeneous Neumann condition $\exp(m) \nabla u \cdot \mathbf{n} = 0$. The Dirichlet boundary $\Gamma_D = (0,1) \times \{0,1\}$ with boundary condition $u = 1$ on $(0,1) \times \{ 1\}$ and $u = 0$ on $(0,1) \times \{ 0\}$. We draw  a sample from the prior and use it as ``true" parameter field $m_{\text{true}}$ as shown Figure \ref{fig:nonlinear-initial} (left). We use quadratic finite elements for the discretization of the state and adjoint variables, and use linear elements for the parameter. The degrees of freedom for the state and parameter are $n_u = 4225$ and $n = 1089$, respectively.
 \begin{figure}[!hbtp]
\centerline{\includegraphics[width=1.\textwidth]{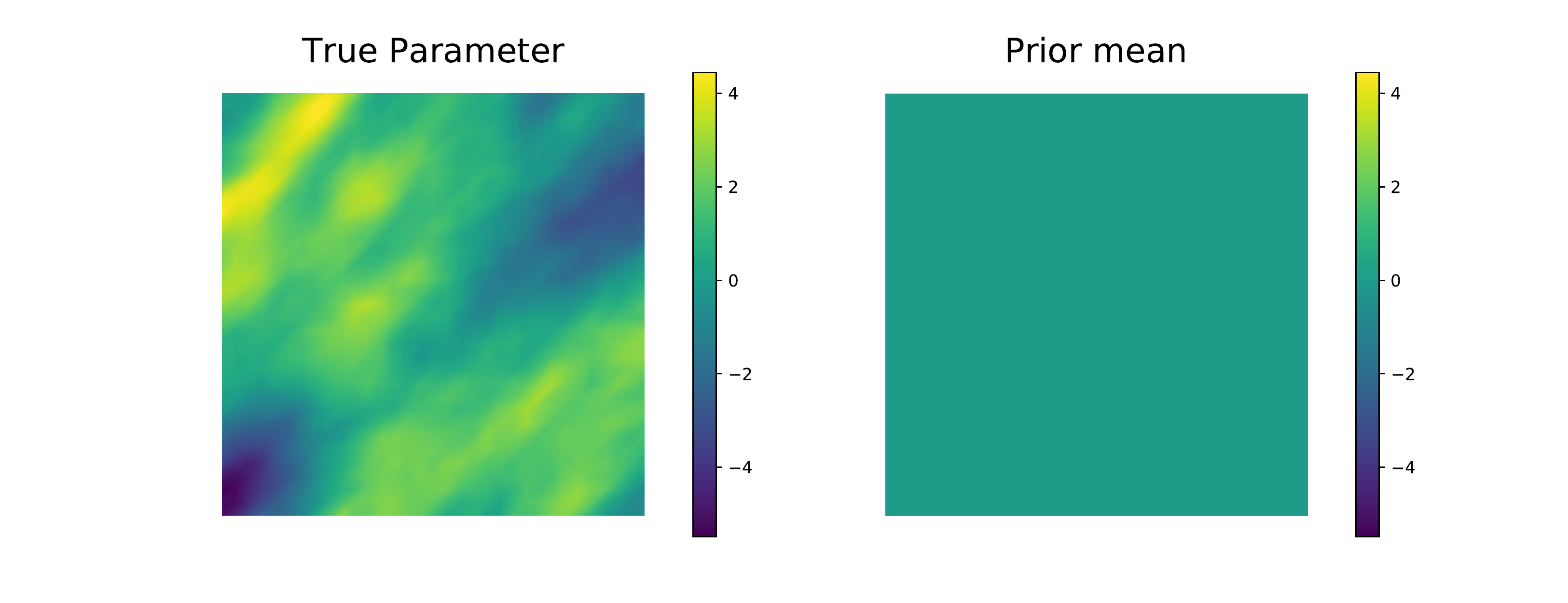}} 
\caption{Left: The synthetic ``true" parameter field. Right: The mean of the prior distribution.} 
\label{fig:nonlinear-initial}
\end{figure}  


 \subsubsection{Effectiveness of approximations} 
 
To reduce the computational cost for the nonlinear inverse problems, we introduced the Laplace approximation with low-rank decomposition, fixed MAP point approximation, and prior sample point approximation in Section \ref{sec:nonlinear}. To investigate their effectiveness, we consider their (sample) correlation at the same design. A high correlation (with correlation coefficient close to $1$) of the approximate EIG values by two different approximations implies that the optimal design obtained by one approximation is likely close to optimal for the other approximation.
\begin{figure}[!hbtp]
\centerline{\includegraphics[width=1.\textwidth]{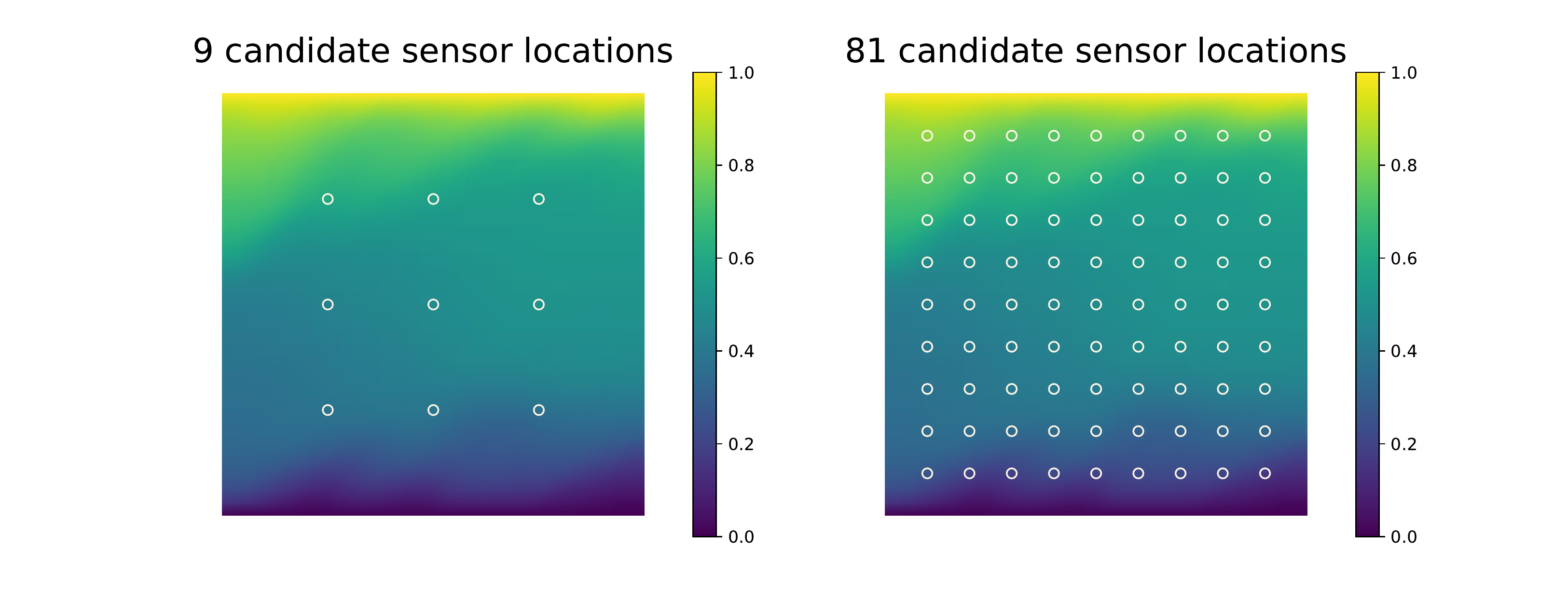}} 
\caption{The state field at the ``true" parameter and two set of candidate observation locations.} 
\label{fig:1}
\end{figure}  

In the test, we use the grid of $d = 81$ candidate sensor locations as shown in Figure \ref{fig:1} (right) with the goal of choosing $r = 10$ sensor locations. We generate $200$ random designs, and compute the EIG of each design by a double-loop Monte Carlo (DLMC) method as the reference. Then we compute their approximate EIG by the Laplace approximation (LA-EIG)  and its further approximation with fixed map point shown in Figure \ref{fig:correlation} (left).  We can see that the correlation between DLMC-EIG and LA-EIG is 0.9752, and the maximum of DLMC-EIG is also the maximum of LA-EIG. The correlation between DLMC-EIG and LA-EIG with fixed map point is 0.9453, and the maximum of LA-EIG with fixed map point is the second maximum of DLMC-EIG. Although it is not the maximum, but it gives almost the same EIG value as the maximum. We can observe closely about the relation between LA-EIG and LA-EIG with fixed MAP point in Figure \ref{fig:correlation} (middle) with a correlation 0.9525, and that their optimal choices have almost the same LA-EIG values. Figure \ref{fig:correlation} (right) illustrates the correlation between $\tilde{\psi}$ computed with fixed MAP point and with prior sample at which to evaluate Hessian. We can see close to $1$ correlation, and that the optimal choices by the fixed map point and the prior sample point are the same.

\begin{figure}[h!]
  \centering
  \begin{subfigure}[b]{0.4\linewidth}
    \includegraphics[width=\linewidth]{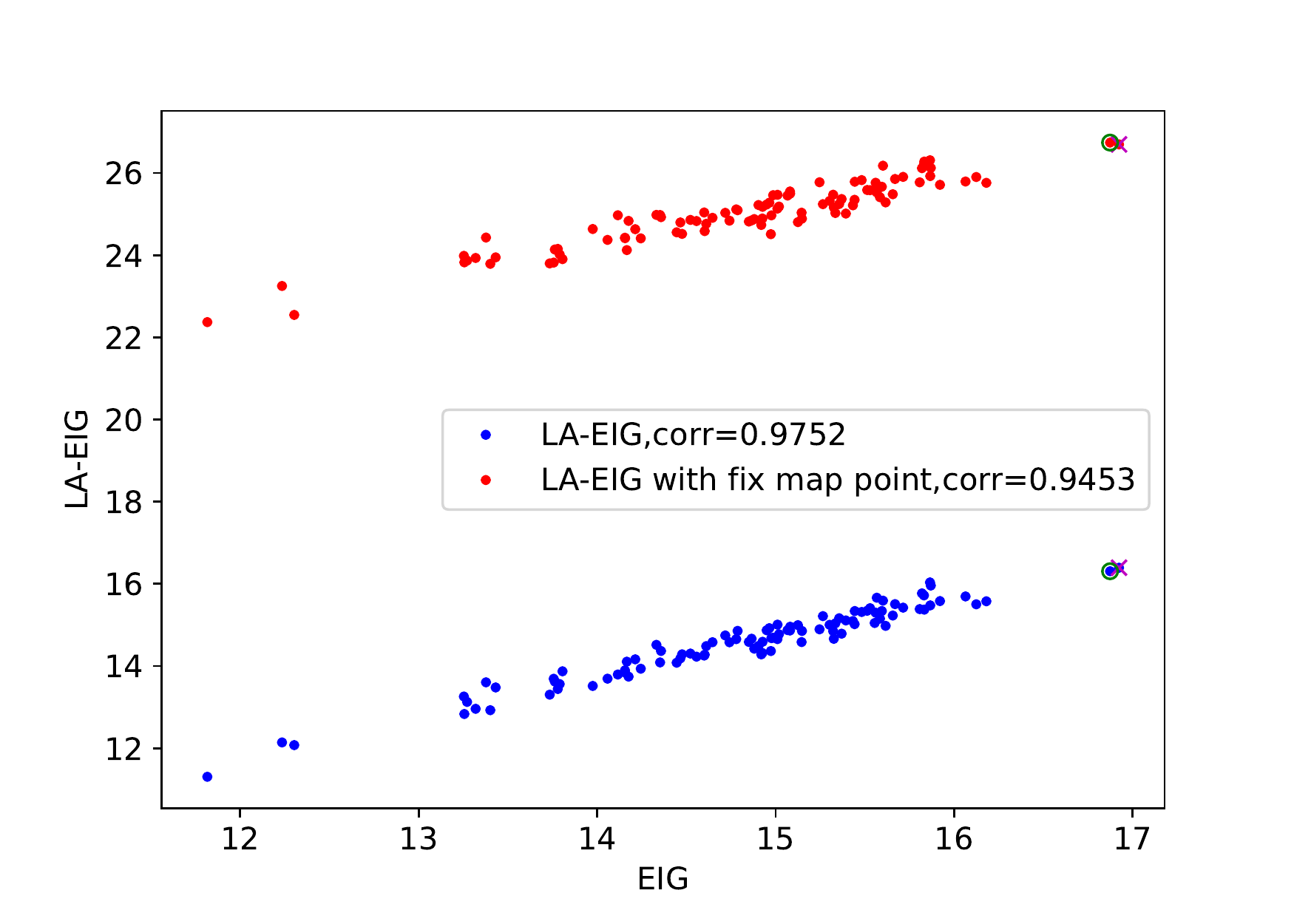}   
  \end{subfigure}
  \begin{subfigure}[b]{0.4\linewidth}
    \includegraphics[width=\linewidth]{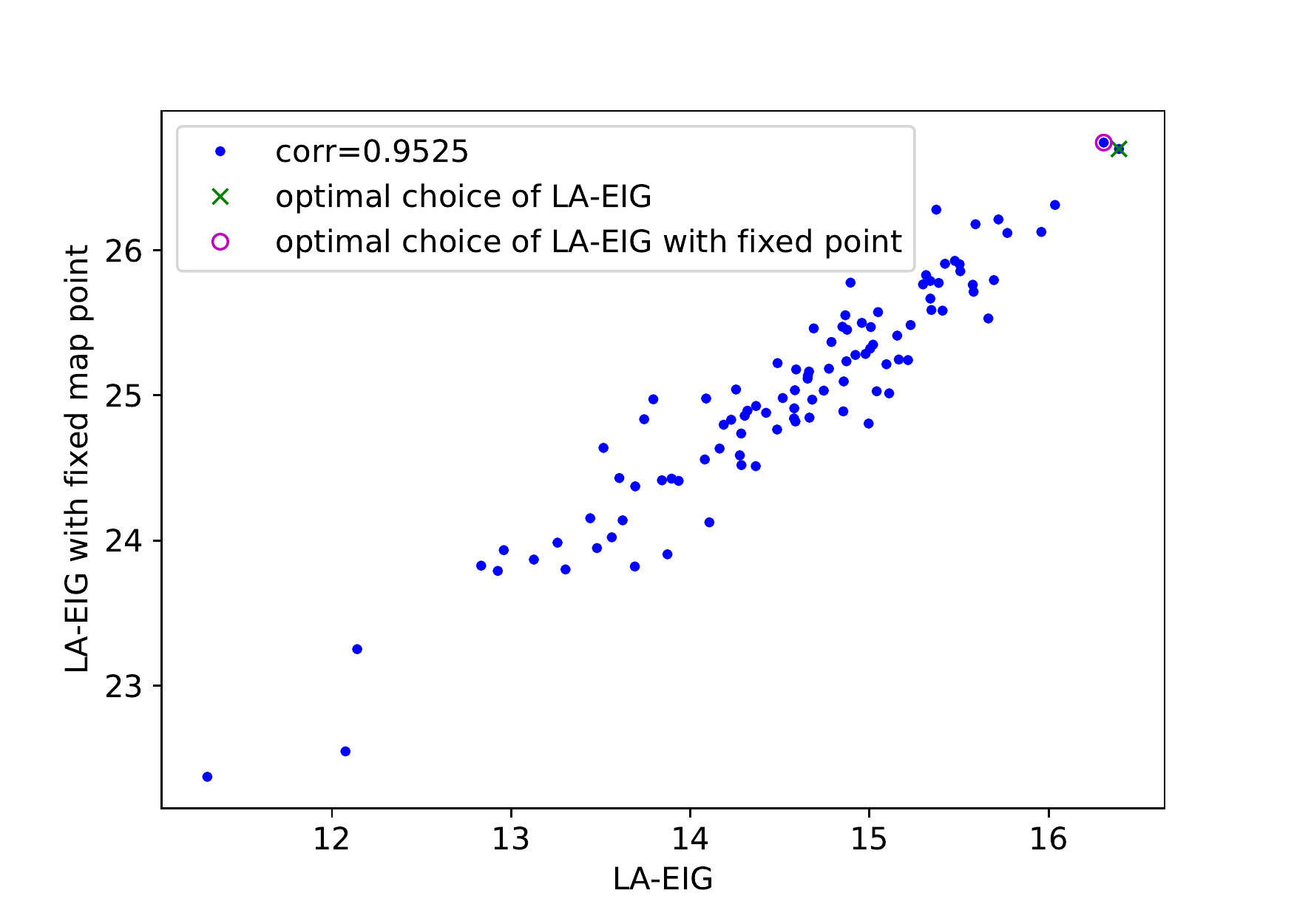}   
  \end{subfigure}
    \begin{subfigure}[b]{0.4\linewidth}
    \includegraphics[width=\linewidth]{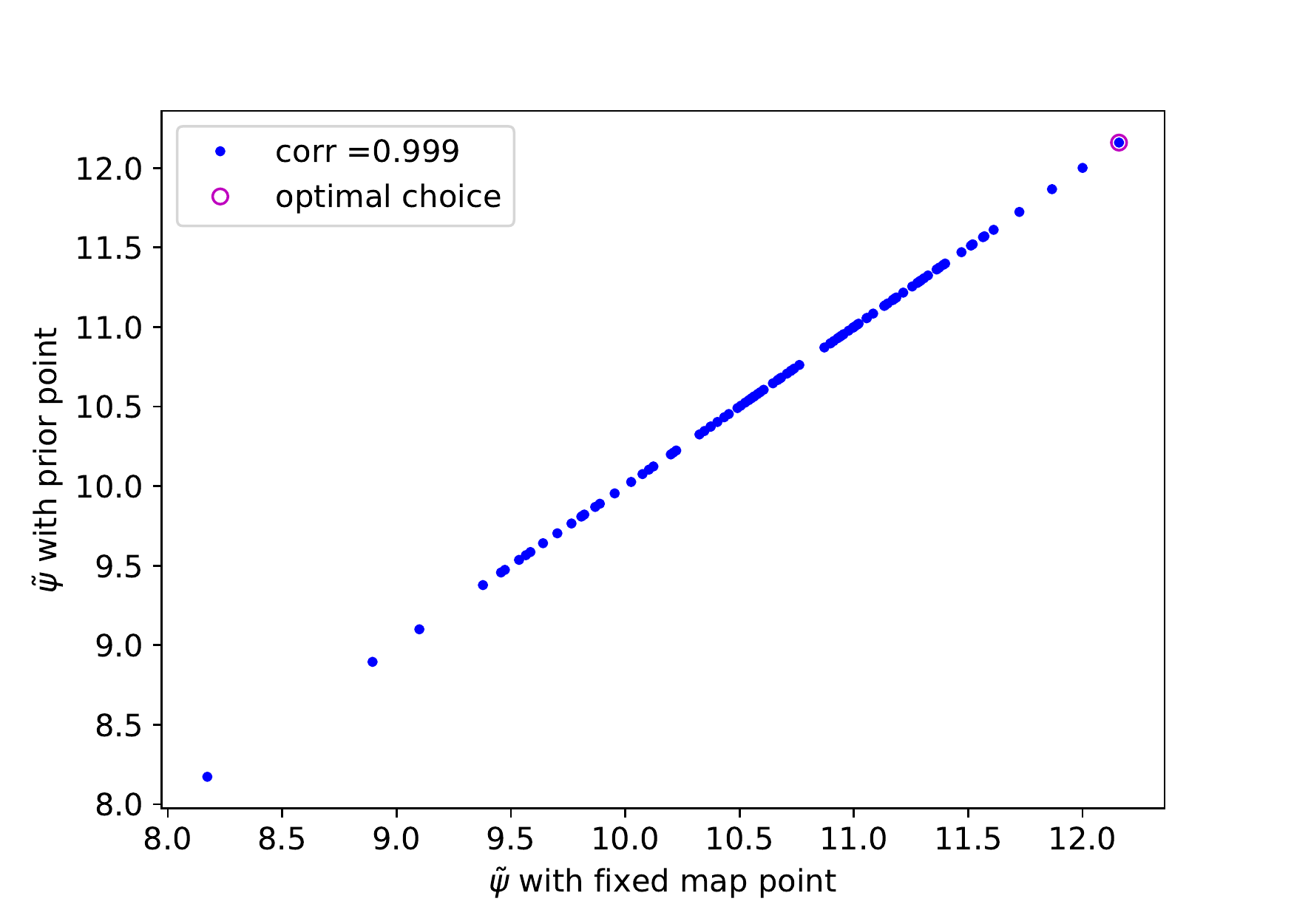}   
  \end{subfigure}
\caption{Correlation between EIG and its approximations. Each blue or red point represents one random design of $10$ sensors among $81$ candidates. Left: DLMC-EIG vs LA-EIG and LA-EIG with fixed MAP point approximation. Middle: LA-EIG and LA-EIG with fixed MAP point approximation. Right: LA-EIG with fixed MAP point approximation vs LA-EIG with prior sample point approximation.
}

\label{fig:correlation}

\end{figure}
 \subsubsection{Numerical results} 
 We first use a grid of $d = 9$ candidate sensor locations with the goal of choosing $r = 2,3,4,5,6,7,8$. As we can see in Figure \ref{fig:poi} (left), the standard greedy and swapping greedy algorithms give the same optimal design for all the cases and they are the actual optimal one among all possible choices for $r = 2,3,5,8$, second best for $r = 4,7$, and third for $r = 6$. We remark that by evaluating the Hessian at the prior sample point, the computational cost is significantly reduced as analyzed in Section \ref{sec:complexity}. Despite that the optimal choice by the approximation might not be the optimal for LA-EIG, we can still find the ones close to the best in all the cases. 

\begin{figure}[h!]
  \centering
  \begin{subfigure}[b]{0.45\linewidth}
    \includegraphics[width=\linewidth]{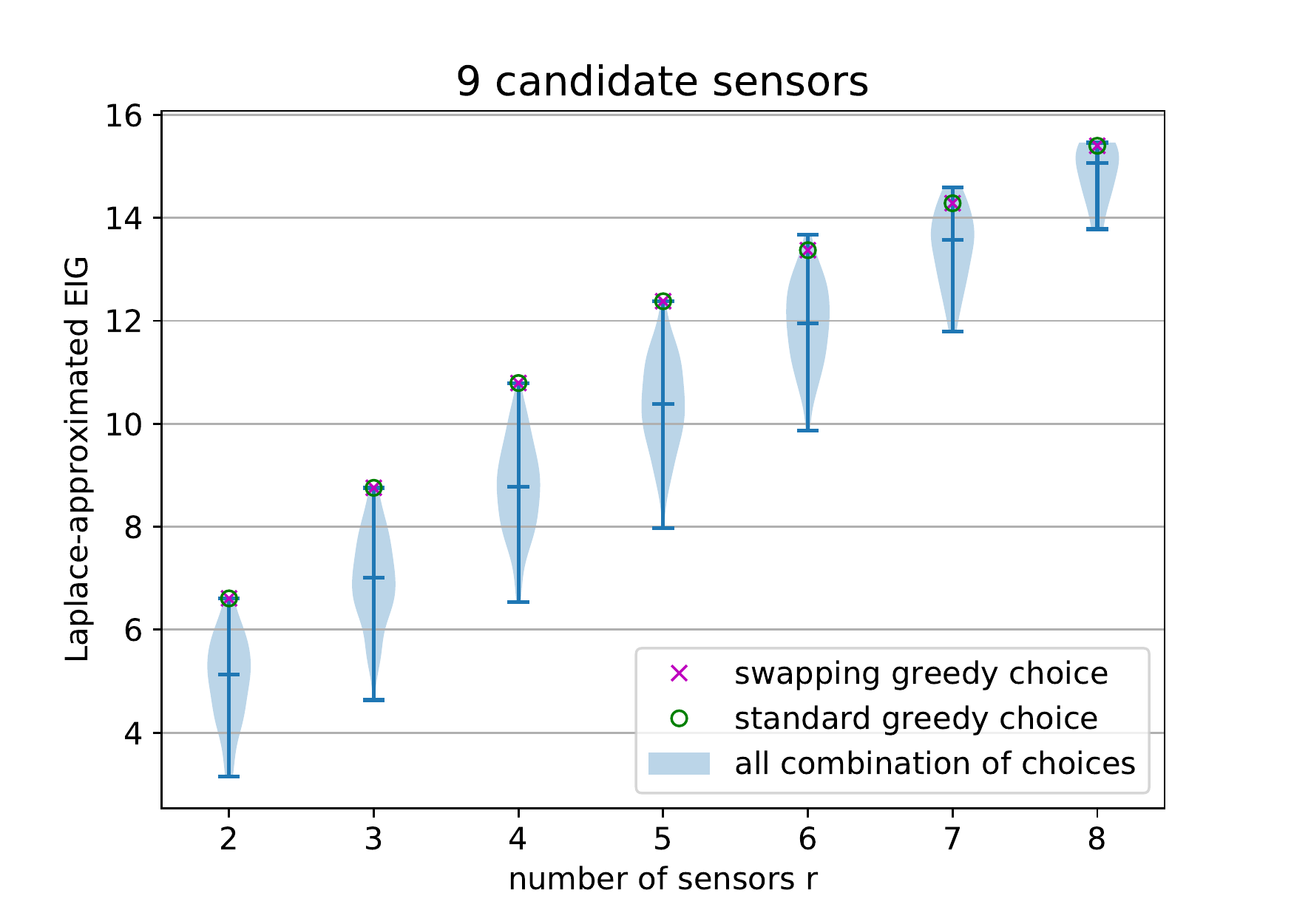}   
  \end{subfigure}
  \begin{subfigure}[b]{0.45\linewidth}
    \includegraphics[width=\linewidth]{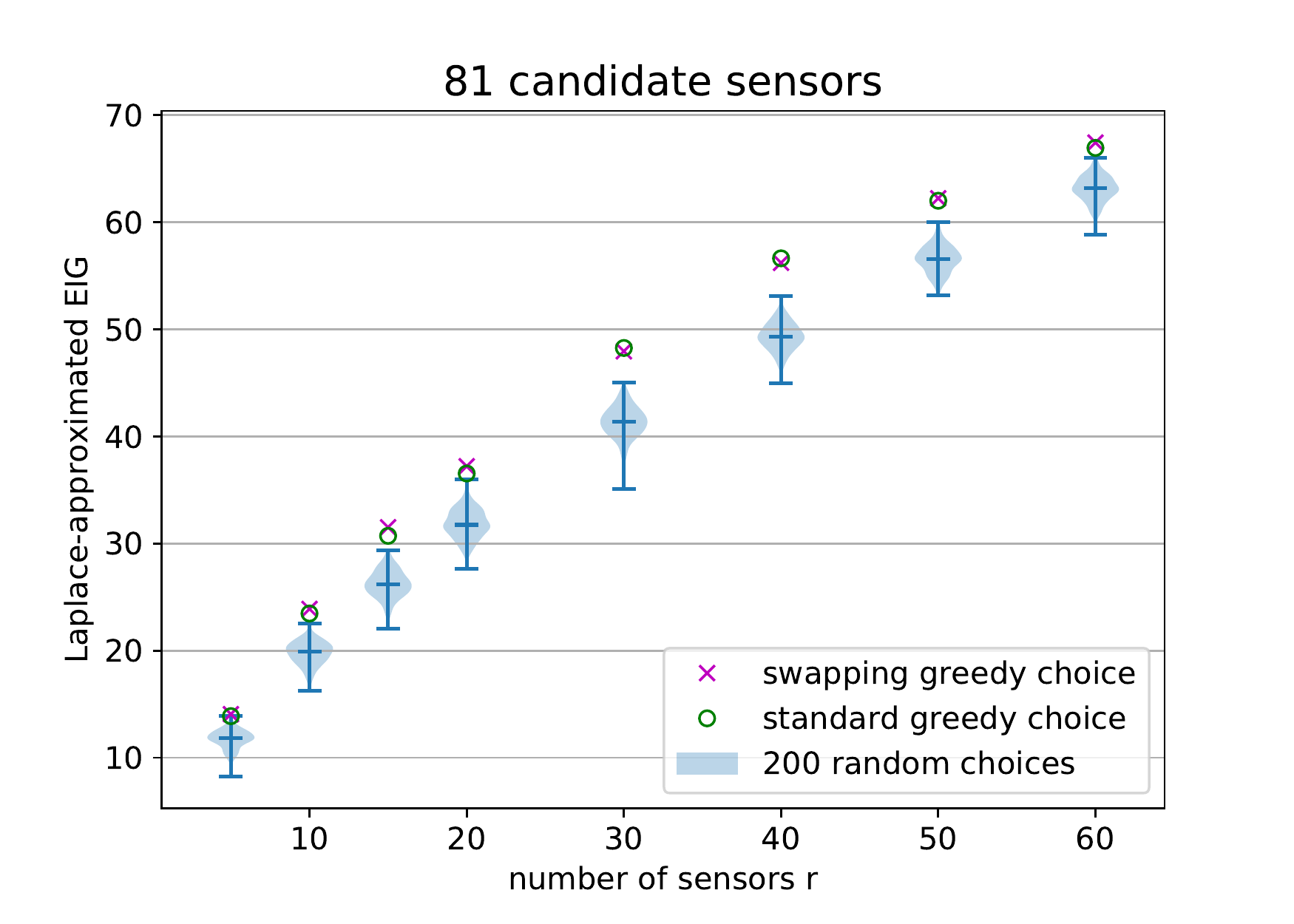}   
  \end{subfigure}

\caption{Laplace approximate EIG with increasing number of sensors out of 9 (left) and 81 (right) candidates.  Blue filled area represents the probability of $\hat{\Psi}$ for all the designs with lines at the minimum, maximum and median.} 
\label{fig:poi}

\end{figure}

\begin{table}[!htbp]
\caption{Approximate EIG $\hat{\Psi}$ at the design chosen by the standard greedy algorithm, the swapping greedy algorithm and at the optimal design, with the ranking of greedy choices among all the possible designs in the bracket. 
}
\begin{center}
\begin{tabular}{|c|c|c|c|c|c|c|c|}
\hline
 $r$ &$2$ &$3$&$4$  & $5$&$6$&$7$ & $8$\\
\hline
\tabincell{c}{standard\\greedy} &$6.5563(\#1)$ &$8.8113(\#1)$&$10.5916(\#2)$  & $12.4633(\#1)$&$13.8670(\#3)$&$15.2979(\#2)$ & $16.3798(\#1)$ \\
\hline
\tabincell{c}{swapping\\greedy} &$6.5563(\#1)$ &$8.8113(\#1)$&$10.5916(\#2)$  & $12.4633(\#1)$&$13.8670(\#3)$&$15.2979(\#2)$ & $16.3798(\#1)$\\
\hline
\tabincell{c}{optimal\\choice} &$6.5563$ &$8.8113$&$10.7776$  & $12.4633$&$14.1259$&$15.3501$ & $16.3798$\\
\hline
\end{tabular}
\end{center}
\label{table:linear}
\end{table}

Then we consider $81$ candidate sensor locations shown in Figure \ref{fig:1} (right). We randomly draw $200$ different designs from the candidate sensors and compute their LA-EIG, and compare them with LA-EIG of the choices by two greedy algorithms. Figure \ref{fig:poi} (right) illustrates that the design chosen by both greedy algorithms are much better than all the random choices, and the swapping greedy algorithm is mostly better or equal to the standard greedy algorithm. 
To illustrate the stability of our method, we compare the greedy choices with $200$ random designs with increasing number of training data $N_s$ and increasing number of candidate sensor locations $d$ (data dimension) in Figure \ref{fig:2}. We can see that the swapping greedy algorithm can always find better designs than the standard greedy algorithm, and much better than the random choices. 
\begin{figure}[h!]
  \centering
  \begin{subfigure}[b]{0.45\linewidth}
    \includegraphics[width=\linewidth]{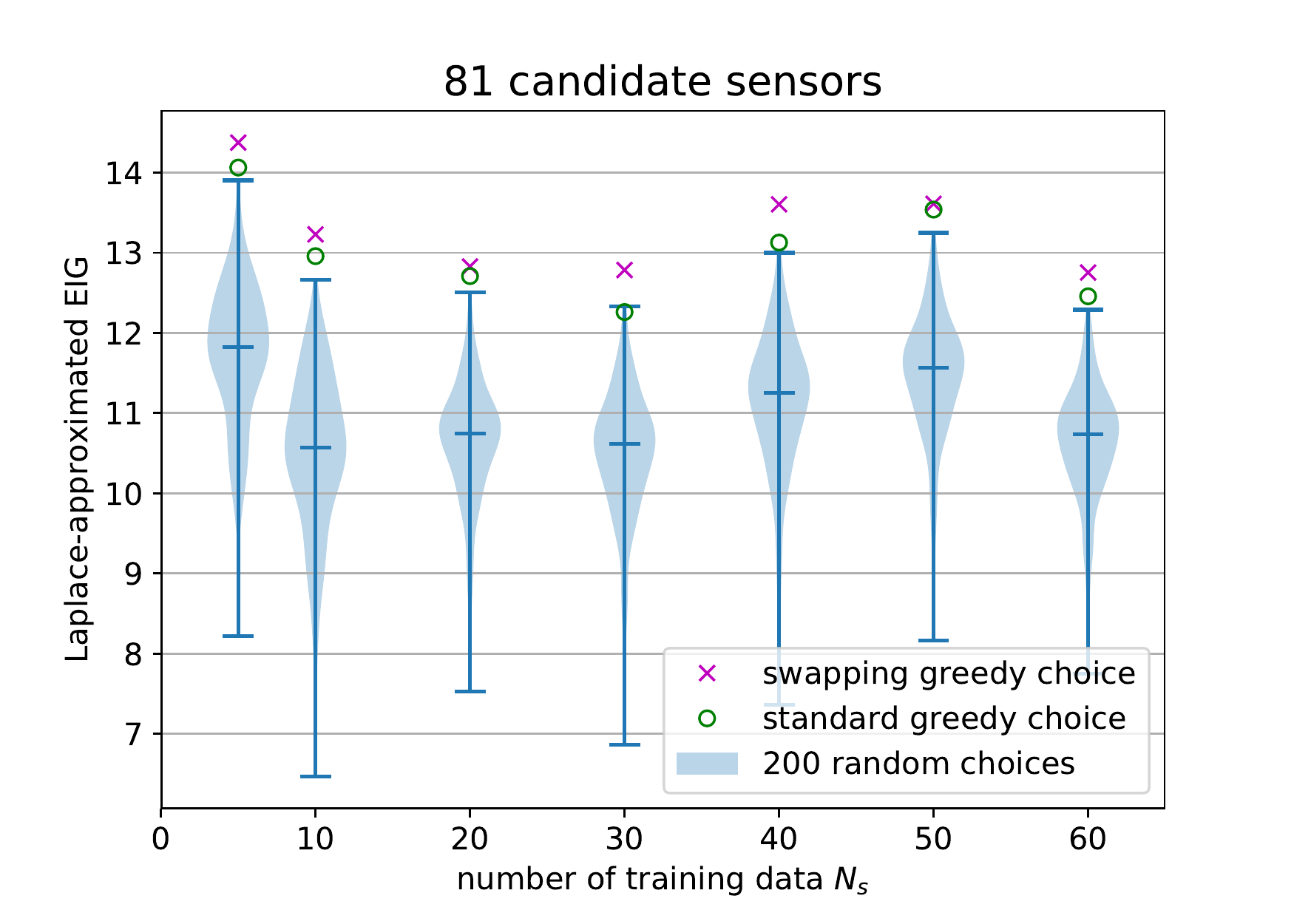}   
  \end{subfigure}
  \begin{subfigure}[b]{0.45\linewidth}
    \includegraphics[width=\linewidth]{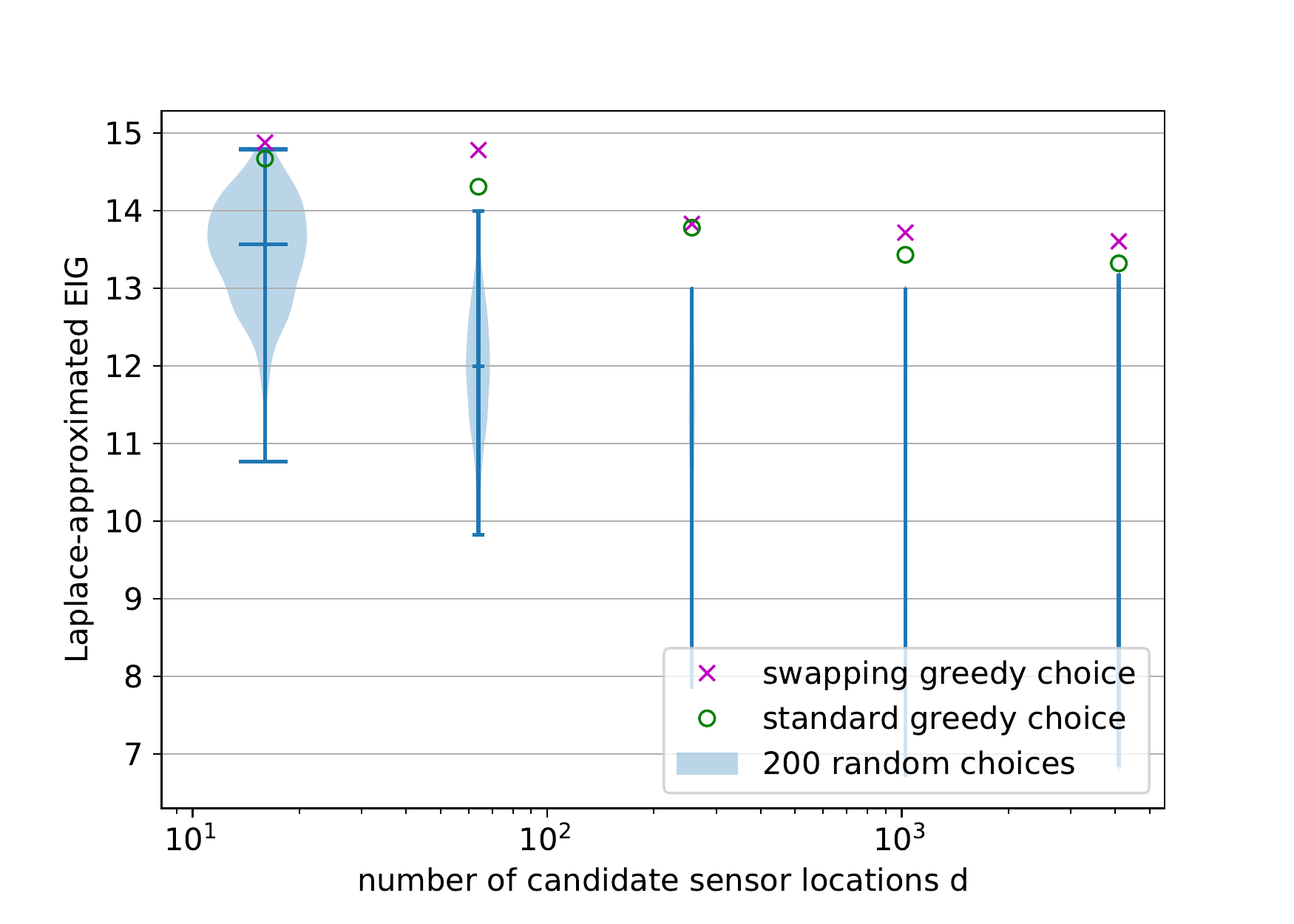}   
  \end{subfigure}

\caption{LA-EIG for choosing 5 sensors at optimal (by the swapping and standard greedy algorithms) and random designs with increasing number of training data (left) and increasing number of candidates (right). Blue filled area represents the probability of $\hat{\Psi}$ for $200$ random designs. 
} 
\label{fig:2}
\end{figure}


\subsubsection{Scalability}
As analyzed in Section \ref{sec:complexity}, the computational complexity in terms of the number of PDE solves critically depends on the rank $r$ in the low-rank approximation of the Hessian $\hat{\mathbf{H}}_d$. We investigate its dependence on the parameter dimension and data dimension (the number of candidate sensor locations). The decay of the eigenvalues of the Hessian are shown in Figure \ref{fig:scalability} with increasing parameter dimension and data dimension. From the similar decay of the eigenvalues in the left part of the figure, we can conclude that our algorithm is (strongly) scalable with respect to the parameter dimension in achieving the same information with the same number of sensors, while from the similar decay rate of the eigenvalues in the right part of the figure, we conclude that it is (weekly) scalable with respect to the data dimension in achieving the sample percentage of information with the same number of sensors.

\begin{figure}[h!]
  \centering
  \begin{subfigure}[b]{0.45\linewidth}
    \includegraphics[width=\linewidth]{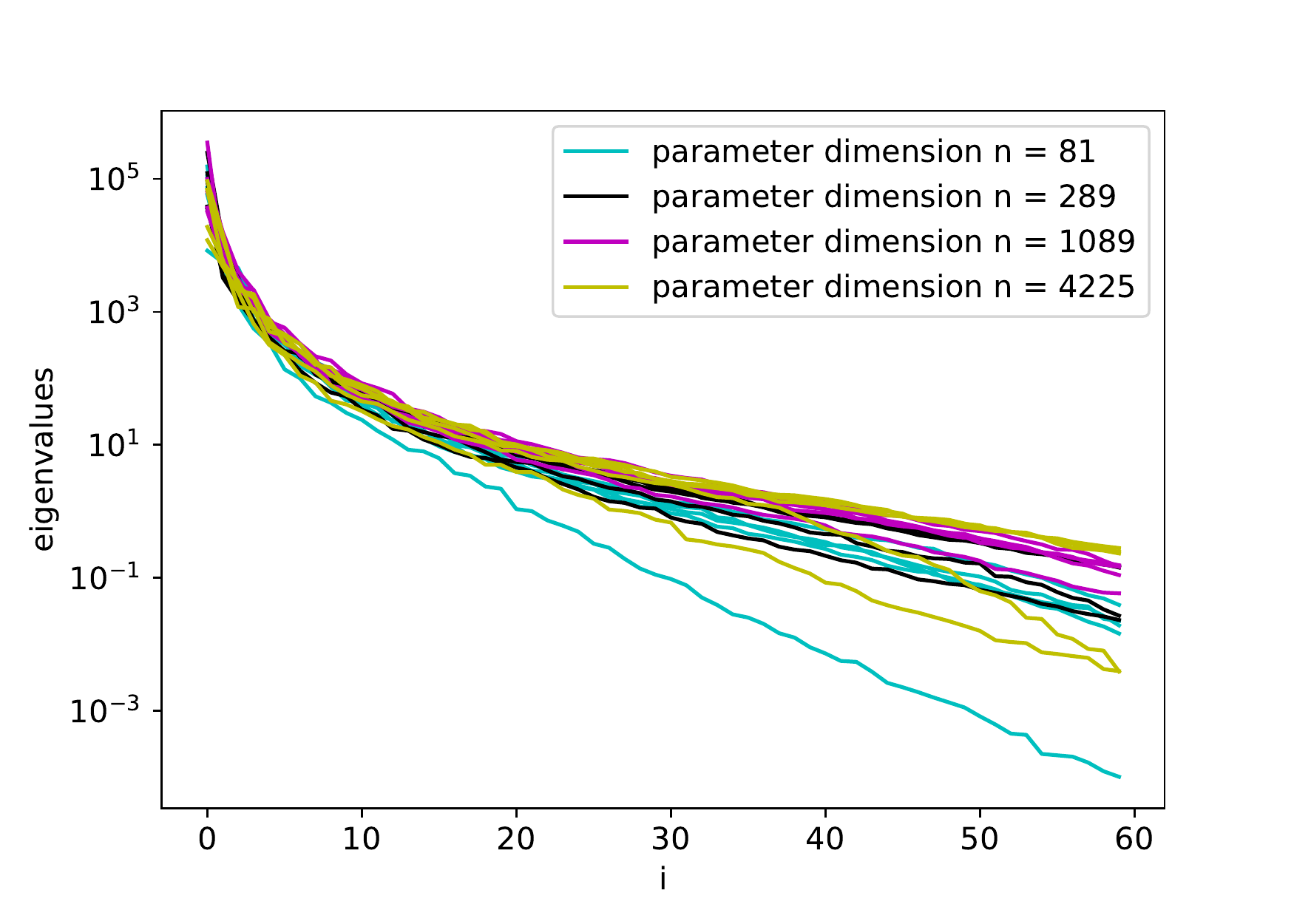}   
  \end{subfigure}
  \begin{subfigure}[b]{0.45\linewidth}
    \includegraphics[width=\linewidth]{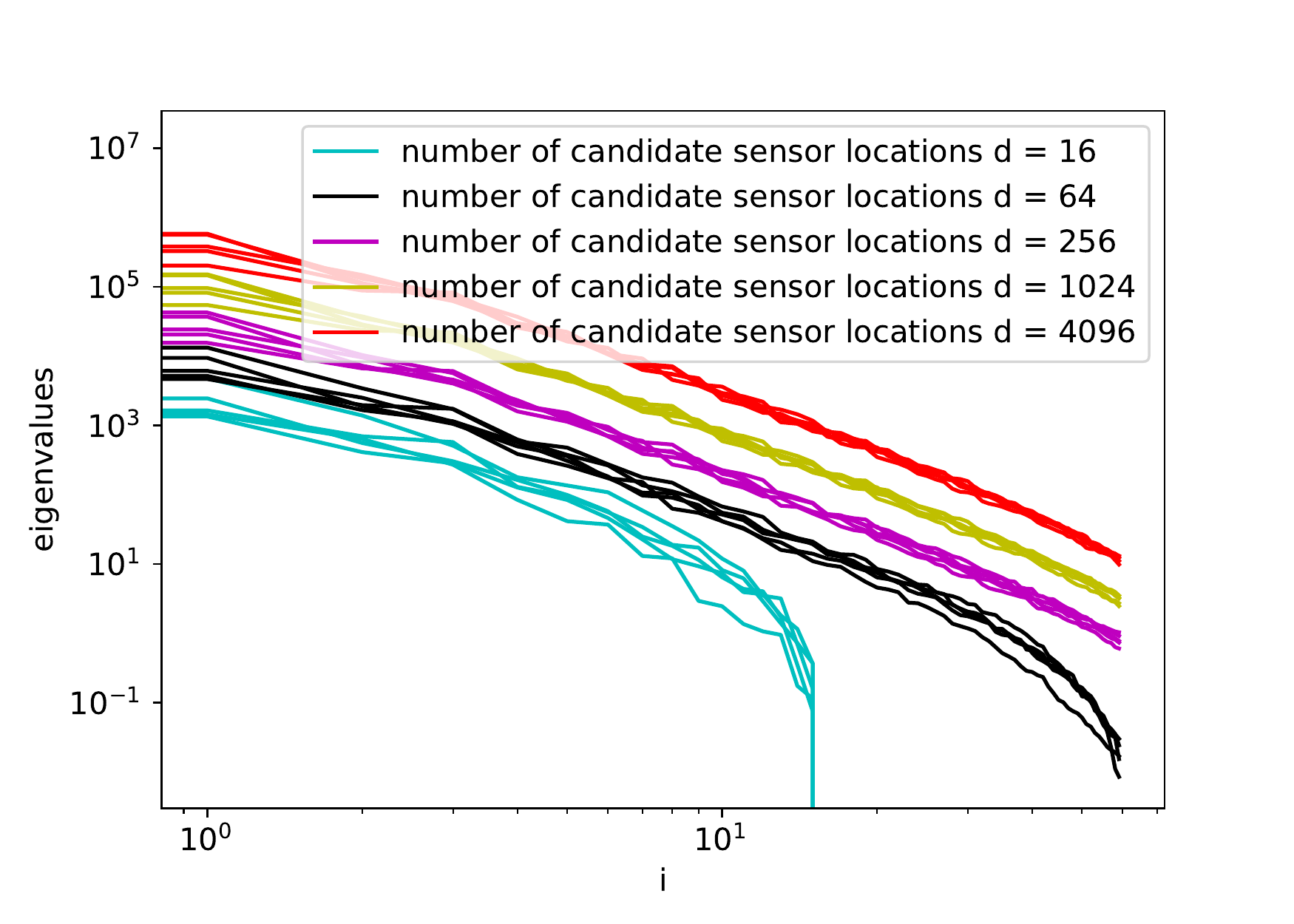}   
  \end{subfigure}
\caption{Left: eigenvalues with increasing parameter dimension. Right: eigenvalues with increasing number of candidate sensor locations. Multiple lines with the same color represent different training data cases.} 
\label{fig:scalability}
\end{figure}

\section{Conclusion}
\label{sec:conclusion}

We have developed a computational framework for optimal experimental design of infinite-dimensional Bayesian inverse problem governed by PDE. Our method exploits low-rank structure of the prior-preconditioned data misfit Hessian, dominant data subspace information from the Jacobian of parameter-to-observable map, laplace approximation of nonlinear Bayesian posterior. We reduce the computational cost of number of PDE solves to a limited number of PDE solves offline. The numerical tests of linear advection-diffusion problem and nonlinear poison problem illustrate the effectiveness and scalability of our methods.  Our limitation lies on the assumption of Gaussian prior and Gaussian additive noise, the series of approximations for nonlinear problems in sacrifice for cheap computation, and the nature of greedy algorithm. Some future work remains: (i) extension to nonlinear problems that laplace method cannot provide a good approximation such as helmholtz equations. (ii)  theoretical guarantees of swapping greedy algorithms. (iii) adaptively determine the number of sensor needed rather than a fixed number.

\bibliographystyle{plain}
\bibliography{paper}

\end{document}